\newtheorem{thm}{Theorem}[section]
\newtheorem{lem}[thm]{Lemma}
\newtheorem{prop}[thm]{Proposition}
\theoremstyle{definition}
\theoremstyle{remark}
\newtheorem*{rem}{Remark} %unnumbered remarks
\theoremstyle{definition}
\newtheorem{ex}[thm]{Example}
\title{Constructing New Realisable Lists from Old in the NIEP}
\date{}
\author{
	Richard Ellard$^1$\thanks{The authors' work was supported by Science Foundation Ireland under Grant 11/RFP.1/MTH/3157.}\ \ and Helena \v{S}migoc$^{2*}$ \\
	{\small School of Mathematical Sciences,} \\ 
	{\small University College Dublin,} \\ 
	{\small Belfield, Dublin 4, Ireland}\\
	{\small $^1$email: richardellard@gmail.com}\\
	{\small $^2$email: helena.smigoc@ucd.ie}\\
}
\begin{document}
\allowdisplaybreaks

\maketitle

\begin{abstract}

Given a list of complex numbers $\sigma:=(\lambda_1,\lambda_2,\ldots,\lambda_m)$, we say that
$\sigma$ is \emph{realisable} if $\sigma$ is the spectrum of some (entrywise) nonnegative matrix.
The Nonnegative Inverse Eigenvalue Problem (or NIEP) is the problem of categorising all realisable
lists.

Given a realisable list $(\rho,\lambda_2,\lambda_3,\ldots,\lambda_m)$, where $\rho$
is the Perron eigenvalue and $\lambda_2$ is real, we find families of lists
\[
  (\mu_1,\mu_2,\ldots,\mu_n),
\]
for which
\[
	(\mu_1,\mu_2,\ldots,\mu_n,\lambda_3,\lambda_4,\ldots,\lambda_m)
\]
is realisable. In addition, given a realisable list
\[(\rho,\alpha+i\beta,\alpha-i\beta,\lambda_4,\lambda_5,\ldots,\lambda_m),\] where $\rho$ is the Perron eigenvalue and $\alpha$ and $\beta$ are real, we find families of lists
$
  (\mu_1,\mu_2,\mu_3,\mu_4),
$
for which
\[
	(\mu_1,\mu_2,\mu_3,\mu_4,\lambda_4,\lambda_5,\ldots,\lambda_m)
\]
is realisable.

{\bf AMS classification:} 15A18, 15A29

{\bf Keywords:} Nonnegative matrices, Nonnegative Inverse Eigenvalue Problem, Companion matrix

\end{abstract}

\section{Introduction}\label{sec:Introduction}

We denote the spectrum of a matrix $A$ by $\sigma(A)$. We say that $A$ is nonnegative if it is
entrywise nonnegative and in this case we write $A\geq0$. In general, if $A,B\in\mathbb{R}^{n\times n}$ or $y,z\in\mathbb{R}^n$, we will use notation such as $A\geq B$ or $y\geq z$ if the inequalities hold entrywise. For a list of complex numbers $\sigma:=(\lambda_1,\lambda_2,\ldots,\lambda_n)$, we define $s_m(\sigma):=\sum_{i=1}^n\lambda_i^m$. $I_n$ denotes the $n\times n$ identity matrix.

We call $\sigma$ \emph{realisable} if there exists a nonnegative matrix $A$ with spectrum $\sigma$ and in this case, we say that $A$ \emph{realises} $\sigma$. The Nonnegative Inverse Eigenvalue Problem (or NIEP) is the problem of categorising all realisable lists.

We begin by stating some well-known necessary conditions for a list to be realisable. Let $\sigma:=(\lambda_1,\lambda_2,\ldots,\lambda_n)$ be the spectrum of a  nonnegative
matrix $A$. Then
	\begin{enumerate}[(i)]
		\item[\textup{(i)}]
		$\sigma$ is closed under complex conjugation, i.e. 
		$		
\overline{\sigma}:=\left(\overline{\lambda_1},\overline{\lambda_2},\ldots,\overline{\lambda_n}\right)=\sigma;
		$
		\item[\textup{(ii)}]
		$
			\max_i|\lambda_i|\in\sigma;
		$
		\item[\textup{(iii)}]
		$
			s_m(\sigma)\geq0
		$
		for every positive integer $m$;
		\item[\textup{(iv)}]
		$
			s_k(\sigma)^m\leq n^{m-1}s_{km}(\sigma)
		$
		for all positive integers $k$ and $m$.
	\end{enumerate}
Condition (i) follows from the fact that the characteristic polynomial of $A$ has real coefficients.
Condition (ii) says that the spectral radius of $A$, $\rho$ say, is an eigenvalue of $A$. This result forms part of the well-known Perron-Frobenius theory of nonnegative matrices. The eigenvalue $\rho$ is known as the \emph{Perron eigenvalue} of $A$ and the corresponding eigenvector is known as the \emph{Perron eigenvector}. We will always write the Perron eigenvalue as the first entry in a realisable list. Condition (iii) follows from the fact that $s_m(\sigma)$ is the trace of $A^m$. The inequalities in (iv) are called the \emph{JLL conditions}. They were proved by Loewy and London \cite{LoewyLondon} and independently by Johnson \cite{Johnson}.

We denote by $e$ the vector of appropriate size with every entry equal to 1, i.e. 
$e:=[\begin{array}{cccc}1 & 1 & \cdots & 1\end{array}]^T$. The following useful result---due to Johnson \cite{Johnson}---allows us to assume without loss of generality that the Perron eigenvector of a realising matrix is $e$. A proof can also be found in \cite{Guo}.

\begin{lem}\label{lem:e}\textup{\bf\cite{Johnson}}
	Let $A$ be a nonnegative matrix with Perron eigenvalue $\rho$. Then there exists a
nonnegative matrix $B$, cospectral with $A$, satisfying $Be=\rho e$.
\end{lem}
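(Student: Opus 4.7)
My plan is by induction on the size $n$ of $A$, separating the irreducible and reducible cases.

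When $A$ is irreducible, the Perron--Frobenius theorem furnishes a strictly positive eigenvector $x > 0$ with $Ax = \rho x$. Taking $D := \mathrm{diag}(x_1, \ldots, x_n)$ (which is invertible) and $B := D^{-1}AD$, the matrix $B$ is similar to $A$ and so cospectral with it, has nonnegative entries $a_{ij}x_j/x_i$, and satisfies $Be = D^{-1}Ax = D^{-1}(\rho x) = \rho e$.

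When $A$ is reducible, after a permutation similarity it takes block lower-triangular form with two diagonal blocks of strictly smaller size. Its spectrum is the multiset union of the two block spectra, so $\rho$ is the Perron eigenvalue of at least one of them; let $A_P$ denote such a block and $A_N$ the other, with $\rho_2 := \rho(A_N) \leq \rho$. By the inductive hypothesis, choose nonnegative matrices $B_P, B_N$, cospectral with $A_P, A_N$, having $B_P e = \rho e$ and $B_N e = \rho_2 e$. I would then assemble
\[
B := \begin{pmatrix} B_P & 0 \\ C & B_N \end{pmatrix},
\]
where $C$ is any nonnegative matrix whose every row sum equals $\rho - \rho_2 \geq 0$ (a constant matrix of the appropriate size works). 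Block lower-triangularity makes the characteristic polynomial of $B$ equal to $\chi_{B_P}\chi_{B_N} = \chi_{A_P}\chi_{A_N} = \chi_A$, so $B$ is cospectral with $A$; $B$ is visibly nonnegative; and the row-sum condition on $C$ yields $Be = \rho e$.

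The main conceptual point is that in the reducible case $B$ need not be similar to $A$---only cospectral with it---which lets us place the Perron block at the top-left of $B$'s block decomposition regardless of where it sat in $A$'s, exploiting the fact that the characteristic polynomial of a block-triangular matrix depends only on its diagonal blocks and not on their order.
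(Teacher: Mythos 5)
Your proof is correct. The paper does not actually prove this lemma---it cites Johnson and remarks that a proof also appears in Guo---but your argument is essentially the standard one found in those references: a diagonal similarity $B=D^{-1}AD$ with $D=\mathrm{diag}(x_1,\ldots,x_n)$ built from a strictly positive Perron eigenvector when $A$ is irreducible, and an induction on the block-triangular decomposition when $A$ is reducible, replacing the coupling block by any nonnegative matrix whose row sums are $\rho-\rho_2$. You are right to emphasise the key subtlety in the reducible case: $B$ is only cospectral with $A$, not similar to it. Moving the block carrying $\rho$ to the top-left corner and discarding the original off-diagonal data is precisely what lets the induction close, and it is legitimate because the characteristic polynomial of a block-triangular matrix is the product of those of its diagonal blocks, independently of their order and of the coupling block. (One could equally phrase the reducible case via the Frobenius normal form, reordering its irreducible diagonal blocks so that a block achieving $\rho$ comes first and then choosing all subdiagonal blocks to equalise the row sums at $\rho$; your two-block induction is a cleaner way of saying the same thing.)
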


In the case where all eigenvalues but the Perron have nonpositive real parts, the NIEP has been completely solved by Laffey and \v{S}migoc \cite{LaffeySmigoc}:

\begin{thm}\textup{\bf\cite{LaffeySmigoc}}\label{thm:LS}
  Let $\rho\geq0$ and let $\lambda_2,\lambda_3,\ldots,\lambda_n$ be complex numbers such that
$\mathrm{Re}\,\lambda_i\leq0$ for all $i=2,3,\ldots,n$. Then the list
$\sigma=(\rho,\lambda_2,\lambda_3,\ldots,\lambda_n)$ is the spectrum of a nonnegative
matrix if and only if the following conditions are satisfied:
  \begin{enumerate}[(i)]
    \item[\textup{(i)}]
    $\sigma$ is closed under complex conjugation;
    \item[\textup{(ii)}]
    $s_1(\sigma)\geq0$;
    \item[\textup{(iii)}]
    $s_1(\sigma)^2\leq ns_2(\sigma)$.
  \end{enumerate}
  Furthermore, when the above conditions hold, $\sigma$ may be realised by a matrix of the form
$G+\gamma I_n$, where $G$ is a nonnegative companion matrix with trace zero and $\gamma$ is a
nonnegative scalar.
\end{thm}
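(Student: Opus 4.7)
The plan is to handle necessity and sufficiency separately. For \emph{necessity}, each of (i), (ii), (iii) is a special case of the general necessary conditions from the introduction: (i) is the conjugate-closure condition verbatim; (ii) is condition (iii) with $m=1$, i.e.\ nonnegativity of the trace; and (iii) is the JLL condition (iv) with $k=1,\,m=2$. So this direction is immediate.

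For \emph{sufficiency}, the plan is to build the realising matrix in the claimed form $G+\gamma I_n$. Set $\gamma:=s_1(\sigma)/n$, which is nonnegative by (ii). Since $\mathrm{Re}\,\lambda_i\leq 0$ for $i\geq 2$, we have $s_1(\sigma)\leq\rho$, hence $\mu_1:=\rho-\gamma\geq 0$. Let $\mu_i:=\lambda_i-\gamma$ for $i\geq 2$. Then the shifted list $\tau:=(\mu_1,\ldots,\mu_n)$ satisfies $s_1(\tau)=0$, $\mathrm{Re}\,\mu_i\leq -\gamma\leq 0$ for $i\geq 2$, and a direct computation gives $s_2(\tau)=s_2(\sigma)-s_1(\sigma)^2/n$, which is nonnegative by (iii). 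It now suffices to realise $\tau$ by a trace-zero nonnegative companion matrix $G$, since then $A:=G+\gamma I_n$ realises $\sigma$.

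To produce $G$, I would study the characteristic polynomial $p(x):=\prod_{i=1}^n(x-\mu_i)$ and argue that $p(x)=x^n-c_{n-2}x^{n-2}-\cdots-c_0$ with every $c_j\geq 0$ (absence of the $x^{n-1}$ term is automatic from $s_1(\tau)=0$, so the companion matrix of $p$ has trace zero). Factor $p(x)=(x-\mu_1)q(x)$, where $q(x):=\prod_{i=2}^n(x-\mu_i)$. Each real root of $q$ is nonpositive, giving a factor $x+|\mu_i|$; each pair of complex conjugate roots produces $(x-\mu_i)(x-\overline{\mu_i})=x^2-2\mathrm{Re}(\mu_i)x+|\mu_i|^2$ with $\mathrm{Re}(\mu_i)\leq 0$. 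Hence $q(x)=\sum_{j=0}^{n-1}q_jx^j$ has nonnegative coefficients, with $q_{n-1}=1$ and $q_{n-2}=\mu_1$. The coefficient of $x^k$ in $p$ for $0\leq k\leq n-2$ is $q_{k-1}-\mu_1 q_k$ (with $q_{-1}:=0$), so what is needed is $\mu_1 q_j\geq q_{j-1}$ for $1\leq j\leq n-2$.

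The main obstacle is establishing these coefficient inequalities; equivalently, showing $(-1)^je_j(\tau)\leq 0$ for $2\leq j\leq n$, where $e_j$ denotes the $j$th elementary symmetric polynomial. For $j=2$, Newton's identities together with $s_1(\tau)=0$ give $e_2(\tau)=-s_2(\tau)/2$, whose nonpositivity is exactly condition (iii). For $j\geq 3$, I would expect to combine Newton's identities with the nonpositive-real-part structure of $\mu_2,\ldots,\mu_n$ and the positivity of $\mu_1$ to reduce each required inequality to a symmetric polynomial expression whose sign is forced by the hypotheses (for instance, the $j=3$ case reduces to $e_3(\tau)=s_3(\tau)/3\geq 0$, which follows from $\mu_1^3\geq 0$ and direct computation on complex conjugate pairs with negative real part). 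The delicate point is precisely this interplay between condition (iii) and the structural sign hypothesis, which together force all of the required coefficient signs simultaneously.
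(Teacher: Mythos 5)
Your overall plan matches the one underlying the paper's treatment (cf.\ the proof of Theorem~\ref{thm:NewLS}, which generalises this result): shift $\sigma$ by $\gamma=s_1(\sigma)/n$ to get a trace-zero list $\tau$, realise $\tau$ by a nonnegative companion matrix $G$, and then $G+\gamma I_n$ realises $\sigma$. Necessity is also handled correctly. The problem is that you stop precisely at the step that carries the whole content of the theorem: showing that condition (iii) (equivalently $b_2\geq 0$, i.e.\ $s_2(\tau)\geq 0$) is enough to force \emph{all} of the coefficients $b_3,\ldots,b_n$ to be nonnegative. This is exactly Lemma~\ref{lem:LS}, which the paper states and cites from \cite{LaffeySmigoc} rather than re-proving; it is a genuine theorem, not a routine manipulation. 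You establish it rigorously only for $j=2$; your $j=3$ reduction to ``$e_3(\tau)=s_3(\tau)/3\geq 0$'' is correct as a reduction, but the justification you give for $s_3(\tau)\geq 0$ is wrong as stated: a conjugate pair $-a\pm ib$ with $a>0$ contributes $2(-a^3+3ab^2)$ to $s_3(\tau)$, which is \emph{negative} when $a^2>3b^2$, so ``direct computation on complex conjugate pairs'' does not close this. What actually saves $j=3$ is the comparison $\bigl(\sum_i a_i\bigr)^3\geq \sum_i a_i^3$ against the Perron term $\mu_1^3$; you do not write this, and in any case it does not scale naively to $j\geq 4$, which you leave entirely open with ``I would expect to combine Newton's identities\ldots''.

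A second, smaller issue: your approach via the factorisation $p=(x-\mu_1)q(x)$ reduces the claim to $\mu_1 q_j\geq q_{j-1}$ for $1\leq j\leq n-2$, i.e.\ a log-concavity-type chain of inequalities for the coefficients of $q$ weighted by $\mu_1$. It is not at all clear these hold individually (they need not), and the actual statement of Lemma~\ref{lem:LS} only asserts the \emph{conclusion} $b_i\geq 0$ from the single hypothesis $b_2\geq 0$, not a termwise domination. So the route you sketch may well require a different organisation than the naive coefficient-by-coefficient comparison. In short: the scaffolding is right and the normalisation $\gamma=s_1(\sigma)/n$ is exactly the one used in the paper, but the proposal is missing a proof of the one nontrivial ingredient, and the partial argument offered for it contains an error.
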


\begin{rem}
	The condition that $\mathrm{Re}\,\lambda_i\leq0$ for all $i=2,3,\ldots,n$ in Theorem \ref{thm:LS} can be relaxed to $\mathrm{Re}\,\lambda_i\leq s_1(\sigma)/n$. To see this, note that the quantity
	\[
		ns_2(\sigma)-s_1(\sigma)^2
	\]
	is unchanged by subtracting a scalar from $\sigma$, i.e.
	\begin{multline*}
		ns_2(\rho-\delta,\lambda_2-\delta,\ldots,\lambda_n-\delta)-s_1(\rho-\delta,\lambda_2-\delta,\ldots,\lambda_n-\delta)^2
		\\=
		ns_2(\rho,\lambda_2,\ldots,\lambda_n)-s_1(\rho,\lambda_2,\ldots,\lambda_n)^2
	\end{multline*}
	for all $\delta\in\mathbb{C}$ and hence if $(\rho,\lambda_2,\ldots,\lambda_n)$ satisfies (i)--(iii), then so does $(\rho-s_1(\sigma)/n,\lambda_2-s_1(\sigma)/n,\ldots,\lambda_n-s_1(\sigma)/n)$.
\end{rem}

The results in this paper fall into the category of constructing new realisable lists from known realisable lists. We give some earlier results of this type below. Guo \cite{Guo} gave the following theorem regarding the perturbation of a realisable list:

\begin{thm}\textup{\bf\cite{Guo}}\label{thm:Guo}
	If $(\rho,\lambda_2,\lambda_3,\ldots,\lambda_n)$ is realisable, where $\rho$ is the Perron
eigenvalue and $\lambda_2$ is real, then
	\[
		(\rho+\delta,\lambda_2\pm \delta,\lambda_3,\lambda_4,\ldots,\lambda_n)
	\]
	is realisable for all $\delta\geq0$.
\end{thm}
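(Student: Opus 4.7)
By Lemma \ref{lem:e}, I may assume $A\geq 0$ realises $\sigma$ with $Ae=\rho e$. Let $v\in\mathbb{R}^n$ be a right eigenvector of $A$ associated with $\lambda_2$ (which exists because $\lambda_2$ is real), let $y\geq 0$ be the left Perron eigenvector normalised so that $y^Te=1$, and let $w$ be a left eigenvector of $A$ for $\lambda_2$ normalised so that $w^Tv=1$. Assuming $\lambda_2\neq\rho$ (the case $\lambda_2=\rho$ being a minor variant handled by a Brauer-type perturbation in a doubled Perron eigenspace), biorthogonality of left and right eigenvectors forces $y^Tv=0$ and $w^Te=0$.

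My candidate realising matrix is the rank-two perturbation
\[
\tilde A:=A+\delta ey^T\pm\delta vw^T.
\]
A direct calculation using the identities above yields $\tilde Ae=(\rho+\delta)e$, $\tilde Av=(\lambda_2\pm\delta)v$, and $z^T\tilde A=\lambda_iz^T$ for every left eigenvector $z$ of $A$ associated with an eigenvalue $\lambda_i$ ($i\geq 3$); hence $\tilde A$ has the required spectrum $(\rho+\delta,\lambda_2\pm\delta,\lambda_3,\ldots,\lambda_n)$. This step is essentially Rado's rank-two extension of Brauer's theorem.

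The main obstacle is verifying that $\tilde A\geq 0$. The piece $\delta ey^T$ is itself nonnegative, but $\pm\delta vw^T$ can have negative entries, since $v$ and $w$ (associated with a non-Perron eigenvalue) will generally carry mixed signs. To surmount this, my plan is either to first replace $A$ by a suitably chosen cospectral nonnegative matrix with enough positive slack in the entries where $vw^T$ would push $\tilde A$ negative, or---more cleanly, at least for the minus-sign case---to build an explicit nonnegative $2\times 2$ matrix with spectrum $(\rho+\delta,\lambda_2-\delta)$, for instance $\bigl(\begin{smallmatrix}\rho&a\\ b&\lambda_2\end{smallmatrix}\bigr)$ with $a,b\geq 0$ and $ab=\delta(\rho-\lambda_2+\delta)\geq 0$, and to splice it into $A$ via a \v{S}migoc-type gluing argument, which sidesteps the mixed-sign issue entirely.
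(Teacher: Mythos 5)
The theorem you are asked to prove is stated in the paper only as a citation (it is Guo's theorem, quoted from \cite{Guo}); the paper supplies no proof of it, so there is no ``paper proof'' to compare against.  Evaluated on its own merits, your proposal correctly handles the spectral bookkeeping but leaves the substantive point---nonnegativity---open, and neither of the two plans you sketch to close the gap actually works.

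The Rado rank-two update $\tilde A=A+\delta ey^T\pm\delta vw^T$ does have spectrum $(\rho+\delta,\lambda_2\pm\delta,\lambda_3,\ldots,\lambda_n)$, assuming diagonalisability and $\lambda_2\neq\rho$.  But the entire content of the theorem is making the perturbed matrix nonnegative, and this you do not do.  Plan (a), ``find a cospectral nonnegative matrix with enough positive slack,'' is not an argument: there is no guarantee that any realising matrix has positive slack in the entries where $vw^T$ is negative, and you give no mechanism for producing one.  Plan (b), ``splice a $2\times2$ block via a \v Smigoc-type gluing,'' also fails, for two distinct reasons.  First, the gluing machinery available in the paper (Lemma~\ref{lem:SubmatrixConstruction}, Theorems~\ref{thm:SmigocDiagonalElement}, \ref{thm:SmigocSubmatrixConstruction}, \ref{thm:Basicp=2}) replaces $p$ eigenvalues by at least $p+1$: the $p=2$, $n=2$ case of Lemma~\ref{lem:p=2} is a tautology (one checks directly that $M'=C'$ then has spectrum exactly $(\rho,\lambda_2)$), and using larger $n$ produces an extra, unwanted eigenvalue.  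Second, the $2\times2$ candidate $\bigl(\begin{smallmatrix}\rho&a\\ b&\lambda_2\end{smallmatrix}\bigr)$ has trace $\rho+\lambda_2$, which is correct only for the $-\delta$ case; the $+\delta$ case needs trace $\rho+\lambda_2+2\delta$, so your plan does not even reach that branch of the statement.

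The repair for the $-\delta$ case is to avoid the mixed-sign left eigenvectors altogether and take a rank-one Brauer/Rado update whose factors are nonnegative.  After normalising (Lemma~\ref{lem:e}) so $Ae=\rho e$, pick a real eigenvector $z$ for $\lambda_2$ scaled so that $z_{\max}=z_k=1$ and $z_{\min}\le0$; then $e-z\ge0$.  Set $a=\delta+\delta^2/(\rho-\lambda_2)\ge0$ and
\[
\tilde A:=A+a\,(e-z)\,e_k^T\ge0.
\]
With $X=[\begin{smallmatrix}e&z\end{smallmatrix}]$ and $Q=\bigl[\begin{smallmatrix}a e_k^T\\ -a e_k^T\end{smallmatrix}\bigr]$, the Rado principle gives that the two eigenvalues changed are those of
\[
\Lambda+QX=\begin{bmatrix}\rho+a&a\\-a&\lambda_2-a\end{bmatrix},
\]
which has trace $\rho+\lambda_2$ and determinant $\rho\lambda_2-a(\rho-\lambda_2)=(\rho+\delta)(\lambda_2-\delta)$, so $\tilde A$ realises $(\rho+\delta,\lambda_2-\delta,\lambda_3,\ldots,\lambda_n)$ and is manifestly nonnegative.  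The $+\delta$ case is genuinely harder: the same type of rank-one, trace-preserving or trace-increasing update $c\,(\alpha e+\beta z)e_k^T$ with $\alpha e+\beta z\ge0$ does not produce $(\rho+\delta,\lambda_2+\delta)$ for all $\delta\ge0$, and a different idea (as in Guo's original argument) is needed.  As written, your proof has a genuine gap in both branches, with the $+\delta$ branch being further from repair.
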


To generalise Theorem \ref{thm:Guo} to the perturbation of non-real eigenvalues, we have the
following theorem. Result (\ref{eq:GuoGuoDecrease}) is due to Laffey \cite{Laffey} and an alternative proof can be found in \cite{GuoGuo}. Result (\ref{eq:GuoGuoIncrease}) is due to Guo and Guo \cite{GuoGuo}.

\begin{thm}\label{thm:GuoGuo}
	If $(\rho,\alpha+i\beta,\alpha-i\beta,\lambda_4,\lambda_5,\ldots,\lambda_n)$ is realisable, where
$\rho$ is the Perron eigenvalue and $\alpha$ and $\beta$ are real, then for all $\delta\geq0$, the
lists
	\begin{equation}\label{eq:GuoGuoDecrease}
		(\rho+2\delta,\alpha-\delta+i\beta,\alpha-\delta-i\beta,\lambda_4,\lambda_5,\ldots,\lambda_n)
	\end{equation}
	and
	\begin{equation}\label{eq:GuoGuoIncrease}
		(\rho+4\delta,\alpha+\delta+i\beta,\alpha+\delta-i\beta,\lambda_4,\lambda_5,\ldots,\lambda_n)
	\end{equation}
	are realisable.
\end{thm}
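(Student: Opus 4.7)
By Lemma~\ref{lem:e}, we may assume the realising matrix $A$ for the original list satisfies $Ae = \rho e$. Let $x + iy$, with $x,y\in\mathbb{R}^n$, be an eigenvector of $A$ for the eigenvalue $\alpha + i\beta$, so that $\mathrm{span}(e,x,y)$ is a three-dimensional real invariant subspace of $A$, on which $A$ acts (in the ordered basis $e,x,y$) as
\[
  \Lambda = \begin{pmatrix} \rho & 0 & 0 \\ 0 & \alpha & \beta \\ 0 & -\beta & \alpha \end{pmatrix}.
\]
The plan is, for each of (\ref{eq:GuoGuoDecrease}) and (\ref{eq:GuoGuoIncrease}), to realise the required spectrum by a nonnegative matrix of the same size $n$, obtained as a rank-three perturbation of $A$.

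Write $W := [\,e\ x\ y\,]\in\mathbb{R}^{n\times 3}$ and set $B := A + WQ$ for a real $Q\in\mathbb{R}^{3\times n}$. Since $AW = W\Lambda$, one has $BW = W(\Lambda + QW)$, so $\mathrm{span}(e,x,y)$ remains invariant under $B$ and the complementary eigenvalues $\lambda_4,\ldots,\lambda_n$ of $A$ are preserved; this is the standard multi-vector extension of Brauer's rank-one eigenvalue-update theorem. For (\ref{eq:GuoGuoDecrease}) I would choose $Q$ so that $QW = \mathrm{diag}(2\delta,-\delta,-\delta)$, making $\Lambda + QW$ a $3\times 3$ matrix with eigenvalues $\rho+2\delta$ and $\alpha-\delta\pm i\beta$. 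For (\ref{eq:GuoGuoIncrease}) the analogous choice $QW = \mathrm{diag}(4\delta,\delta,\delta)$ yields eigenvalues $\rho+4\delta$ and $\alpha+\delta\pm i\beta$; the corresponding $6\delta$ increase in the trace of $B$ over $A$ matches exactly the trace difference of the two lists.

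Each prescription $QW = D$ amounts to nine linear conditions on the $3n$ entries of $Q$, consistent (when $\beta\neq 0$, so that $e,x,y$ are linearly independent) and leaving $3(n-3)$ free parameters; the case $\beta = 0$ reduces to Theorem~\ref{thm:Guo}. The genuine obstacle is to use the remaining freedom to force
\[
  B \;=\; A + e q_1^{\,T} + x q_2^{\,T} + y q_3^{\,T} \;\geq\; 0
\]
entrywise, where $q_i^{\,T}$ denotes the $i$th row of $Q$. Since the entries of $x$ and $y$ may have either sign, the last two outer products may produce negative contributions, and this is where the argument needs real work. My strategy would be to concentrate the supports of $q_2$ and $q_3$ on indices where $A$ is strictly positive, exploiting the $3(n-3)$ free parameters; if this cannot be arranged directly for a given $A$, I would first enlarge $A$ (for instance by a \v Smigoc-style glueing with a nonnegative companion realiser supplied by Theorem~\ref{thm:LS}) to acquire further slack before applying the rank-three update.

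Between the two parts, the traceless case (\ref{eq:GuoGuoDecrease}) should be the harder, since the positive and negative contributions of $WQ$ must cancel globally; by contrast, in (\ref{eq:GuoGuoIncrease}) one can absorb the bulk of the perturbation into the nonnegative outer product $eq_1^{\,T}$ (choosing $q_1\geq 0$ with $q_1^{\,T}e = 4\delta$), leaving only a small traceless correction from $xq_2^{\,T}+yq_3^{\,T}$ to control. This asymmetry is consistent with the fact that (\ref{eq:GuoGuoIncrease}) is due to Guo and Guo while (\ref{eq:GuoGuoDecrease}) required Laffey's separate argument.
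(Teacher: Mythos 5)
The paper does not prove Theorem~\ref{thm:GuoGuo}; it cites Laffey~\cite{Laffey} for (\ref{eq:GuoGuoDecrease}) and Guo and Guo~\cite{GuoGuo} for (\ref{eq:GuoGuoIncrease}), so your attempt must stand on its own. The reduction you set up is sound as far as it goes: with $W = [\,e\ x\ y\,]$ and $B = A + WQ$, the Perfect--Rado extension of Brauer's theorem does give $\sigma(B) = \sigma(\Lambda + QW) \cup \{\lambda_4,\ldots,\lambda_n\}$, and prescribing $QW = \mathrm{diag}(2\delta,-\delta,-\delta)$ or $\mathrm{diag}(4\delta,\delta,\delta)$ yields exactly the two target spectra.

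The gap is nonnegativity: you never prove that $Q$ can be chosen with $A + WQ \geq 0$, and this is where the entire difficulty of the theorem lives. Counting free parameters ($3(n-3)$ entries of $Q$ after imposing $QW = D$) says nothing about feasibility, since nonnegativity is a system of $n^2$ inequalities in those parameters and there is no a priori reason the feasible set is nonempty. The heuristic of supporting $q_2, q_3$ on indices where $A$ is strictly positive is not an argument and is not obviously achievable: a realiser $A$ may be sparse (Theorem~\ref{thm:LS}, for instance, produces companion-form realisers with almost all entries zero), and even after applying Lemma~\ref{lem:e} there is no guarantee of enough positive mass in the right rows and columns to absorb the negative contributions of $xq_2^T + yq_3^T$. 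The fallback of enlarging $A$ by a \v{S}migoc glueing is only named, not carried out; you would have to show that the enlarged matrix still has $e$ as its Perron eigenvector and a real eigenvector pair for $\alpha\pm i\beta$ amenable to the same update, and that the resulting nonnegative matrix realises the intended list, none of which is argued. In short, you have correctly reduced the problem to a concrete nonnegativity question and correctly located the bottleneck, but you have not crossed it; that question is precisely the theorem, and both published proofs resolve it by arguments that are not recoverable from this sketch.
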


\v{S}migoc \cite{SmigocDiagonalElement} gives a different kind of perturbation, in which the
Perron eigenvalue of a realisable list may be replaced by a new list:

\begin{thm}\textup{\bf\cite{SmigocDiagonalElement}}\label{thm:SmigocDiagonalElement}
	Let $(\rho,\lambda_2,\lambda_3,\ldots,\lambda_m)$ be realisable, where $\rho$ is the Perron
eigenvalue and let $(\mu_1,\mu_2,\ldots,\mu_n)$ be the spectrum of a nonnegative matrix with a
diagonal element greater than or equal to $\rho$. Then
	\[
		(\mu_1,\mu_2,\ldots,\mu_n,\lambda_2,\lambda_3,\ldots,\lambda_m)
	\]
	is realisable.
\end{thm}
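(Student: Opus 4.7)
The plan is to produce an explicit nonnegative matrix $C$ of size $m+n-1$ realising the merged list, via a Brauer-type block construction.  By Lemma~\ref{lem:e} I may assume that $A$ (realising $(\rho,\lambda_2,\ldots,\lambda_m)$) satisfies $Ae=\rho e$, and by a permutation similarity on $B$ I may assume that the distinguished diagonal entry is $b_{11}\geq\rho$.  Partition
\[
B=\begin{pmatrix} b_{11} & d^T \\ f & B_1 \end{pmatrix},
\]
and write $e_1$ for the first standard basis vector (of whichever size is appropriate).

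The preparatory step is a Brauer shift of $A$.  Let $\delta:=b_{11}-\rho\geq 0$ and set
\[
A':=A+\delta\, e\, e_1^T.
\]
Since $\delta\geq 0$ and $A\geq 0$, $A'$ is nonnegative; since $e$ is the Perron eigenvector of $A$, Brauer's theorem gives $A'e=b_{11}e$ and $\sigma(A')=(b_{11},\lambda_2,\ldots,\lambda_m)$.  I would then define
\[
C:=\begin{pmatrix} A' & e\, d^T \\ f\, w^T & B_1 \end{pmatrix},
\]
where $w\in\mathbb{R}^m$ is any nonnegative vector with $w^Te=1$ (for example $w=e_1$).  Nonnegativity of $C$ is immediate block by block.

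The heart of the argument is the characteristic polynomial of $C$.  Applying the Schur complement with respect to the $(\lambda I-B_1)$-block and setting $\gamma(\lambda):=d^T(\lambda I-B_1)^{-1}f$, one obtains
\[
\det(\lambda I-C)=\det(\lambda I-B_1)\cdot\det\!\bigl[\lambda I-A'-\gamma(\lambda)\, e\, w^T\bigr].
\]
A second application of Brauer's theorem, now to $A'$ (whose Perron eigenvector is still $e$, with eigenvalue $b_{11}$), shows that $A'+\gamma(\lambda)\, e\, w^T$ has spectrum $(b_{11}+\gamma(\lambda)\, w^Te,\lambda_2,\ldots,\lambda_m)=(b_{11}+\gamma(\lambda),\lambda_2,\ldots,\lambda_m)$, so the second factor equals $(\lambda-b_{11}-\gamma(\lambda))\prod_{j=2}^m(\lambda-\lambda_j)$.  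Combining with the standard identity $\det(\lambda I-B)=\det(\lambda I-B_1)\bigl[(\lambda-b_{11})-\gamma(\lambda)\bigr]$ yields
\[
\det(\lambda I-C)=\det(\lambda I-B)\prod_{j=2}^m(\lambda-\lambda_j),
\]
whose roots are $(\mu_1,\ldots,\mu_n,\lambda_2,\ldots,\lambda_m)$, as required.

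The two hypotheses enter in a very targeted way: $b_{11}\geq\rho$ is precisely what keeps the Brauer-shifted $A'$ nonnegative, while $w^Te=1$ is precisely what makes the second Brauer application produce the scalar matching the Schur-complement identity for $B$.  The main point of care will be bookkeeping the two Brauer applications inside the Schur complement, but once $C$ is written down each verification is a routine polynomial identity (valid generically, then extended by continuity of polynomials in $\lambda$).
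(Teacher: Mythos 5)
Your argument is correct: the block matrix $C$ is nonnegative by inspection, and the Schur-complement identity combined with two applications of Brauer's determinant theorem produces exactly the characteristic polynomial $\det(\lambda I-B)\prod_{j=2}^m(\lambda-\lambda_j)$, so $C$ realises $(\mu_1,\ldots,\mu_n,\lambda_2,\ldots,\lambda_m)$. Note, however, that the paper does not itself prove Theorem~\ref{thm:SmigocDiagonalElement} --- it is quoted from the reference --- and it positions the result as the $p=1$ specialisation of Lemma~\ref{lem:SubmatrixConstruction} together with Theorem~\ref{thm:NonnegativityConditions}. Up to a permutation similarity, your matrix is precisely the matrix $N$ of Lemma~\ref{lem:SubmatrixConstruction} with $Y_1=e$, the $1\times 1$ block $C=[b_{11}]$, $L=d^T$ and $H=fw^T$ (so that $HY_1=f(w^Te)=f=K$), applied to the Brauer-shifted matrix $A'$ realising $(b_{11},\lambda_2,\ldots,\lambda_m)$; the preliminary shift is the standard way to reduce the inequality $b_{11}\geq\rho$ to the equality case required by that framework. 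What is genuinely different is the verification: the paper's lemma exhibits an explicit conjugation $Z^{-1}NZ$ that block-triangularises $N$ and reads off $\sigma(N)=(\sigma(M),\sigma(F))$, whereas you compute $\det(\lambda I-C)$ directly by taking the Schur complement of $\lambda I-B_1$ and invoking Brauer's identity twice. Both are sound. Your route is self-contained and is arguably the shortest path to this particular statement; the similarity argument in Lemma~\ref{lem:SubmatrixConstruction} is designed to scale to the $p=2$ and $p=3$ constructions of Sections~\ref{sec:p=2} and~\ref{sec:p=3}, which a Brauer-based argument does not handle, since Brauer's identity is specific to rank-one perturbations along a single eigenvector (i.e.\ to $p=1$).
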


In \cite{SmigocSubmatrixConstruction}, \v{S}migoc gives a construction to replace both the Perron
eigenvalue and another real eigenvalue:

\begin{thm}\textup{\bf\cite{SmigocSubmatrixConstruction}}\label{thm:SmigocSubmatrixConstruction}
	Let $(\rho,\lambda_2,\lambda_3,\ldots,\lambda_m)$ be realisable, where $\rho$ is
the Perron eigenvalue and $\lambda_2$ is real. Let $a$ and $t_1$ be any nonnegative numbers and let $t_2$ be any real number such that $|t_2|\leq t_1$. Then
	\[
		(\mu_1,\mu_2,\mu_3,\lambda_3,\lambda_4,\ldots,\lambda_m)
	\]
	is realisable, where $\mu_1,\mu_2,\mu_3$ are the roots of the polynomial
	\[
		w(x)=(x-\rho)(x-\lambda_2)(x-a)-(t_1+t_2)x+t_1\lambda_2+t_2\rho.
	\]
\end{thm}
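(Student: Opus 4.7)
The plan is to construct an explicit $(m+1)\times(m+1)$ nonnegative matrix realising the target list via a Brauer--Rado type rank-$3$ perturbation of a direct sum. By Lemma \ref{lem:e}, I may assume that a nonnegative matrix $A$ realises $(\rho,\lambda_2,\ldots,\lambda_m)$ with $Ae=\rho e$. Fix a right eigenvector $w_2\in\mathbb{R}^m$ of $A$ corresponding to $\lambda_2$ and set
\[
M_0=\begin{pmatrix} A & 0 \\ 0 & a \end{pmatrix},
\]
whose spectrum is $(\rho,\lambda_2,\ldots,\lambda_m,a)$. The right eigenvectors of $M_0$ for $\rho,\lambda_2,a$ are $\hat e=\begin{pmatrix} e \\ 0\end{pmatrix}$, $\hat w_2=\begin{pmatrix} w_2 \\ 0\end{pmatrix}$ and $g:=\begin{pmatrix} 0 \\ 1\end{pmatrix}$ respectively. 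Taking $X=[\,\hat e\mid\hat w_2\mid g\,]$, the Rado extension of Brauer's theorem gives, for any $3\times(m+1)$ matrix $C$,
\[
\sigma(M_0+XC)=\{\lambda_3,\ldots,\lambda_m\}\cup\sigma\bigl(\mathrm{diag}(\rho,\lambda_2,a)+CX\bigr).
\]

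The key algebraic fact is that the $3\times3$ matrix
\[
N=\begin{pmatrix} \rho & 0 & t_1 \\ 0 & \lambda_2 & t_2 \\ 1 & 1 & a \end{pmatrix}
\]
has characteristic polynomial equal to $w(x)$, as is verified by a direct cofactor expansion. I would therefore choose $C$ so that $\mathrm{diag}(\rho,\lambda_2,a)+CX=N$; this imposes nine linear conditions on the rows $c_1^T,c_2^T,c_3^T$ of $C$ (each living in $\mathbb{R}^{m+1}$), leaves ample freedom elsewhere, and makes $\sigma(M_0+XC)$ equal to the desired list.

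The main obstacle is ensuring $M_0+XC\geq 0$ subject to these constraints. When $t_2\geq 0$, one may concentrate the rows of $C$ on the last coordinate: the first $m$ columns of $XC$ vanish, $M_0+XC$ modifies $A$ only in its last row and column, and nonnegativity follows directly from $t_1,t_2,a\geq 0$ together with appropriate rescaling of $\hat w_2$. When $t_2<0$, the nonpositive entry $N_{23}=t_2$ cannot sit solely in the last column; the natural remedy is to replace $N$ by the similar matrix
\[
\widehat N=\begin{pmatrix} \rho & \dfrac{t_1(\rho-\lambda_2)}{t_1+t_2} & 0 \\ 0 & \lambda_2 & t_1+t_2 \\ 1 & 1 & a \end{pmatrix},
\]
which has the same characteristic polynomial $w(x)$ (by another direct expansion) and whose off-diagonal entries become nonnegative precisely when $t_1+t_2\geq 0$, that is, when $|t_2|\leq t_1$. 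The boundary case $t_2=-t_1$ would be handled by a limiting argument. The technical heart of the proof is the case-by-case componentwise check that, for each sign of $t_2$, the Rado perturbation arising from $N$ or $\widehat N$ yields a nonnegative $M_0+XC$; the hypothesis $|t_2|\leq t_1$ is exactly the ingredient that makes this possible.
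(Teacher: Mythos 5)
The paper does not reprove Theorem \ref{thm:SmigocSubmatrixConstruction}; it is cited from \cite{SmigocSubmatrixConstruction}, and the intended route (stated in the surrounding text) is via Lemma \ref{lem:SubmatrixConstruction}, Theorem \ref{thm:NonnegativityConditions} and the cone characterisations of Propositions \ref{prop:Cones1} and \ref{prop:Cones2} with $p=2$, $n_1=1$. Your proposal takes a genuinely different route: a Brauer--Rado rank-$3$ perturbation of $A\oplus[a]$. That is a legitimate alternative strategy, and your $3\times3$ matrix $N$ is correctly chosen --- a cofactor expansion does give $\det(xI-N)=w(x)$, and the same holds for $\widehat N$. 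However, the nonnegativity analysis, which you yourself flag as the ``technical heart,'' contains a concrete error and a significant omission.

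The error is in the $t_2\ge 0$ case. If the rows of $C$ are concentrated on the last coordinate, then $XC$ is supported on the last column only, so $M_0+XC$ has the form $\left[\begin{smallmatrix}A & * \\ 0 & *\end{smallmatrix}\right]$ --- block upper triangular --- and its spectrum is just $\sigma(A)$ together with the $(m+1,m+1)$ entry. The eigenvalues $\rho$ and $\lambda_2$ are untouched, and $CX$ is strictly upper triangular, so $\mathrm{diag}(\rho,\lambda_2,a)+CX$ cannot equal $N$: the required entries $N_{31}=N_{32}=1$ are unreachable this way. A correct choice must place mass of the first two rows of $C$ in the last coordinate (to get $t_1,t_2$) and mass of the third row of $C$ in coordinates $1,\ldots,m$ (to get the $1$'s), and then $M_0+XC=\left[\begin{smallmatrix}A & v \\ (c_3')^T & a\end{smallmatrix}\right]$ with $v_i=t_1+(w_2)_it_2$ and $c_3'\ge 0$ a convex weight vector with $c_3'\cdot w_2=1$.

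The omission is the normalisation of $w_2$. For $c_3'\cdot w_2=1$ with nonnegative $c_3'$ summing to $1$ to be solvable you need $\max_i(w_2)_i\ge 1$; for $v_i\ge 0$ given only $|t_2|\le t_1$ you need $|(w_2)_i|\le 1$ for all $i$. Both hold simultaneously only if $\max_i(w_2)_i\ge|\min_i(w_2)_i|$, which is not automatic but can always be arranged by flipping the sign of $w_2$. You must make this normalisation explicit; as written, the argument fails whenever $w_2$ (scaled so $\max=1$) has an entry below $-1$. Once the sign and scaling of $w_2$ are fixed correctly, a single matrix $N$ handles both signs of $t_2$ (worst case $(w_2)_i=-1$ when $t_2\ge 0$, worst case $(w_2)_i=1$ when $t_2<0$, each giving $v_i\ge t_1-|t_2|\ge 0$), so the case split and the auxiliary matrix $\widehat N$ are unnecessary. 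With these two repairs the Brauer--Rado argument does go through and is a clean alternative to the paper's cone-based $p=2$ machinery, but as it stands the proposal has a genuine gap at exactly the step it declares to be the heart of the proof.
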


In Section \ref{sec:p=2}, we expand on the work done in \cite{SmigocSubmatrixConstruction} by
presenting some new lists which may replace the eigenvalues $\rho$ and $\lambda_2$. In Section
\ref{sec:p=3}, we give a construction which allows us to replace the Perron eigenvalue and a complex
conjugate pair of eigenvalues, i.e. given a realisable list
\[
	(\rho,\alpha+i\beta,\alpha-i\beta,\lambda_4,\lambda_5,\ldots,\lambda_m),
\]	
where $\rho$ is the Perron eigenvalue and $\alpha$ and $\beta$ are real, we find some conditions on the list $(\mu_1,\mu_2,\mu_3,\mu_4)$ which imply that
\[
	(\mu_1,\mu_2,\mu_3,\mu_4,\lambda_4,\lambda_5,\ldots,\lambda_m)
\]
is realisable.

To this end, we begin by giving a Lemma from \cite{SmigocSubmatrixConstruction}, which is the
foundation of this work:

\begin{lem}\textup{\bf\cite{SmigocSubmatrixConstruction}}\label{lem:SubmatrixConstruction}
	Let the following assumptions hold:
	\begin{enumerate}[(i)]
		\item[\textup{(i)}]
		$Y$ is an invertible matrix with a partition
		$
			Y=\left[
				\begin{array}{cc}
					Y_1 & Y_2
				\end{array}
			\right],
		$
		where $Y_1$ is an $m\times p$ matrix and $Y_2$ is an $m\times m_1$ matrix with
$p+m_1=m$;
		\item[\textup{(ii)}]
		$B$ is an $m\times m$ matrix such that
		\[
			Y^{-1}BY=\left[
				\begin{array}{cc}
					C & E \\
					0 & F
				\end{array}
			\right]
		\]
		for a $p\times p$ matrix $C$ and an $m_1\times m_1$ matrix $F$;
		\item[\textup{(iii)}]
		$M$ is an $n\times n$ matrix with a principal submatrix $C$, partitioned in the
following way:
		\[
			M=\left[
				\begin{array}{cc}
					A & K \\
					L & C
				\end{array}
			\right],
		\]
		where $A$ is an $n_1\times n_1$ matrix and $p+n_1=n$;
		\item[\textup{(iv)}]
		$K=HY_1$ for an $n_1\times m$ matrix $H$.
	\end{enumerate}
	Then for matrices
	\[
		N=\left[
			\begin{array}{cc}
				A & H \\
				Y_1L & B
			\end{array}
		\right]
		\;\; \text{and} \;\;
		Z=\left[
			\begin{array}{cc}
				I_{n_1} & 0 \\
				0 & Y
			\end{array}
		\right],
	\]
	we have
	\[
		Z^{-1}NZ=\left[
			\begin{array}{ccc}
				A & K & HY_2 \\
				L & C & E \\
				0 & 0 & F
			\end{array}
		\right].
	\]
\end{lem}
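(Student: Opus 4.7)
The plan is to verify the identity by a direct block-matrix computation, using the structural hypotheses (ii) and (iv) together with one simple observation about how $Y^{-1}$ acts on $Y_1$.

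First I would note that since $Z$ is block diagonal with blocks $I_{n_1}$ and $Y$, and $Y$ is invertible by (i), we have
\[
  Z^{-1}=\begin{pmatrix} I_{n_1} & 0 \\ 0 & Y^{-1} \end{pmatrix}.
\]
Multiplying out,
\[
  NZ=\begin{pmatrix} A & H \\ Y_1L & B \end{pmatrix}\begin{pmatrix} I_{n_1} & 0 \\ 0 & Y \end{pmatrix}=\begin{pmatrix} A & HY \\ Y_1L & BY \end{pmatrix},
\]
and then
\[
  Z^{-1}NZ=\begin{pmatrix} A & HY \\ Y^{-1}Y_1L & Y^{-1}BY \end{pmatrix}.
\]

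Next I would simplify each block. The $(1,2)$-block becomes $HY=H[Y_1\ Y_2]=[HY_1\ HY_2]=[K\ HY_2]$ by hypothesis (iv). The $(2,2)$-block is exactly the block triangular form in hypothesis (ii). The only step requiring a small observation is the $(2,1)$-block: since $Y_1$ consists of the first $p$ columns of $Y$, the identity $Y^{-1}Y=I_m$ gives
\[
  Y^{-1}Y_1=\begin{pmatrix} I_p \\ 0 \end{pmatrix},
\]
where the zero block is of size $m_1\times p$. Consequently $Y^{-1}Y_1L=\begin{pmatrix} L \\ 0 \end{pmatrix}$.

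Assembling these four block simplifications into the $2\times 2$ block form above and re-partitioning the second block-row and block-column according to the $p+m_1$ split yields the asserted $3\times 3$ block expression. I do not anticipate a genuine obstacle here: the whole argument is a book-keeping exercise, and the only conceptual step is recognising that extracting the first $p$ columns of $Y$ and then applying $Y^{-1}$ returns the first $p$ columns of the identity; once that is in place, the remaining identifications follow directly from (ii) and (iv).
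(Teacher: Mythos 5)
Your proof is correct, and it is the direct block-matrix computation one would expect (the paper cites this lemma from \cite{SmigocSubmatrixConstruction} without reproducing a proof, but the argument there is the same book-keeping: multiply out $Z^{-1}NZ$, use $Y^{-1}Y_1=\left[\begin{smallmatrix}I_p\\0\end{smallmatrix}\right]$, hypothesis (ii) for $Y^{-1}BY$, and hypothesis (iv) for $HY$). The key observation you single out --- that $Y^{-1}Y_1$ picks out the first $p$ columns of the identity --- is exactly the step that makes the construction work.
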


In particular, Lemma \ref{lem:SubmatrixConstruction} produces a matrix $N$ with spectrum
$\sigma(N)=(\sigma(M),\sigma(F))$. In order to apply this construction to the NIEP, it is necessary
to determine when the matrix $N$ produced in this way is nonnegative. In
\cite{SmigocSubmatrixConstruction}, \v{S}migoc gives the following answer to this question:

For an $m\times p$ matrix $Y_1$, we define the sets:
\[
	\mathcal{L}(Y_1):=\{l\in\mathbb{R}^p:Y_1l\geq0\}
\]
and
\[
	\mathcal{K}(Y_1):=\{k\in\mathbb{R}^p:k^T=h^TY_1 \text{ for some nonnegative }
h\in\mathbb{R}^m \}.
\]
For a $p\times p$ matrix $C$ and an $m\times p$ matrix $Y_1$, we define $\mathcal{M}_n(Y_1,C)$ to be
the set of all $n\times n$ matrices
\[
	M=\left[
		\begin{array}{cc}
			A & K \\
			L & C
		\end{array}
	\right],
\]
such that $A$ is an $n_1\times n_1$ nonnegative matrix, $n=n_1+p$, every column of $L$ lies in
$\mathcal{L}(Y_1)$ and the transpose of every row of $K$ lies in $\mathcal{K}(Y_1)$.

\begin{thm}\textup{\bf\cite{SmigocSubmatrixConstruction}}\label{thm:NonnegativityConditions}
	Let the assumptions (i)--(iv) in Lemma \ref{lem:SubmatrixConstruction} hold. Assume also
that $B$ is nonnegative, that the Perron eigenvalue of $B$ lies in $\sigma(C)$ and that
$M\in\mathcal{M}_n(Y_1,C)$. Then the matrix $N$ of the lemma is nonnegative, i.e. the list
$(\sigma(M),\sigma(F))$ is realisable by a nonnegative matrix with principal submatrices $A$ and
$B$.
\end{thm}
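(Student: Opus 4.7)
The plan is to verify that the matrix $N = \begin{bmatrix} A & H \\ Y_1 L & B \end{bmatrix}$ produced by Lemma \ref{lem:SubmatrixConstruction} is (entrywise) nonnegative block-by-block; the spectral conclusion $\sigma(N) = \sigma(M) \cup \sigma(F)$ is already furnished by that lemma via the similarity to an upper block-triangular matrix with diagonal blocks $M$ and $F$, so nothing beyond nonnegativity needs to be checked.

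First, I would observe that the two diagonal blocks of $N$ are nonnegative by hypothesis: $A \geq 0$ is part of the definition of $\mathcal{M}_n(Y_1, C)$, and $B \geq 0$ is given directly. For the bottom-left block, since each column $\ell$ of $L$ lies in $\mathcal{L}(Y_1)$, we have $Y_1 \ell \geq 0$, and hence $Y_1 L \geq 0$. The crucial step is producing the top-right block $H$: since the transpose of each row $k_i^T$ of $K$ lies in $\mathcal{K}(Y_1)$, there exists a nonnegative vector $h_i \in \mathbb{R}^m$ with $k_i^T = h_i^T Y_1$. Assembling these $h_i^T$ as the rows of an $n_1 \times m$ matrix $H$ yields both $H \geq 0$ and $K = H Y_1$, which is exactly hypothesis (iv) of Lemma \ref{lem:SubmatrixConstruction}, allowing that lemma to be invoked. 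All four blocks of $N$ are now nonnegative, so $N \geq 0$.

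I expect no genuine obstacle: the sets $\mathcal{L}(Y_1)$ and $\mathcal{K}(Y_1)$ were defined precisely so that this block verification succeeds, and the choice of $H$, while non-unique, can always be made nonnegatively row-by-row from the representatives guaranteed by $\mathcal{K}(Y_1)$. The hypothesis that the Perron eigenvalue of $B$ lies in $\sigma(C)$ does not appear to play a direct role in the nonnegativity argument itself; it is the natural compatibility condition ensuring that a nonnegative Perron eigenvector of $B$ lies in the $B$-invariant subspace spanned by the columns of $Y_1$, which is what makes $\mathcal{L}(Y_1)$ and $\mathcal{K}(Y_1)$ rich enough to supply nontrivial $M \in \mathcal{M}_n(Y_1,C)$ in the intended applications to the NIEP.
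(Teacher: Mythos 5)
Your block-by-block verification is correct and is the natural (essentially only) way to establish nonnegativity of $N$; since the paper cites this theorem from \cite{SmigocSubmatrixConstruction} without reproducing a proof, there is no in-paper argument to compare against, but your proposal matches the intended argument.

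A small point of precision worth noting: assumption (iv) of Lemma~\ref{lem:SubmatrixConstruction} fixes a matrix $H$ with $K = HY_1$, and since $Y_1$ is $m \times p$ with $m > p$, this $H$ is not unique --- a generic solution need not be nonnegative. You handle this correctly by constructing a specific nonnegative $H$ row-by-row from the $\mathcal{K}(Y_1)$ representatives and observing that this $H$ still satisfies (iv); the theorem's conclusion should then be read as ``with this choice of $H$, the resulting $N$ is nonnegative,'' which is all that the realisability claim requires. Your observation that the Perron-eigenvalue hypothesis on $B$ is not invoked in the nonnegativity computation is also accurate: it is a nontriviality/compatibility condition (it guarantees, after normalisation via Lemma~\ref{lem:e}, that the all-ones Perron eigenvector sits in the column space of $Y_1$, so that $\mathcal{L}(Y_1)$ and $\mathcal{K}(Y_1)$ are substantial cones and $\mathcal{M}_n(Y_1,C)$ has interesting members), but it carries no logical weight in verifying $N \geq 0$ once $M \in \mathcal{M}_n(Y_1,C)$ is assumed.
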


Theorem \ref{thm:NonnegativityConditions} provides a method of producing new realisable lists from
old. With $p=1$, it allows us to replace the Perron eigenvalue of a known realisable list, for
example as in Theorem \ref{thm:SmigocDiagonalElement}. The $p=1$ case has been dealt with in detail in \cite{SmigocDiagonalElement}. With $p=2$, it allows us to replace the Perron eigenvalue and
another real eigenvalue, for example as in Theorem \ref{thm:SmigocSubmatrixConstruction}. The $p=2$ case is dealt with in \cite{SmigocSubmatrixConstruction} and we give further results in Section
\ref{sec:p=2}. With $p=3$, Theorem \ref{thm:NonnegativityConditions} allows us to replace the
Perron eigenvalue and a complex conjugate pair of eigenvalues (see Section \ref{sec:p=3}).

\section{A $p=2$ construction}\label{sec:p=2}

In this section, given a realisable list $(\rho,\lambda_2,\lambda_3,\ldots,\lambda_m)$,
where $\rho$ is the Perron eigenvalue and $\lambda_2$ is real, we present some lists
$(\mu_1,\mu_2,\ldots,\mu_n)$ such that $(\mu_1,\mu_2,\ldots,\mu_n,\lambda_3,\lambda_4,\ldots,\lambda_m)$ is realisable. This corresponds to letting $p=2$ in Lemma \ref{lem:SubmatrixConstruction}.

In \cite{SmigocSubmatrixConstruction}, \v{S}migoc characterises $\mathcal{L}(Y_1)$ and
$\mathcal{K}(Y_1)$ for the $p=2$ case. Using Lemma \ref{lem:e}, we may assume without loss of
generality that the eigenvector corresponding to $\rho$ is $e$. Let $z$ be a real
eigenvector corresponding to $\lambda_2$ and let $z_\mathrm{max}$ and $z_\mathrm{min}$ denote the maximal and minimal entries of $z$, respectively. In \cite{SmigocSubmatrixConstruction}, Section 4, \v{S}migoc shows that we may assume $z_\mathrm{max}>0$ and $z_\mathrm{min}\leq0$. She then gives the following characterisations of $\mathcal{L}(Y_1)$ and $\mathcal{K}(Y_1)$:

\begin{prop}\textup{\bf\cite{SmigocSubmatrixConstruction}}\label{prop:Cones1}
  If $z_\mathrm{max}>0$ and $z_\mathrm{min}<0$, then
\[ 
\mathcal{L}(Y_1)=\left\{\left[\begin{array}{c}l_1\\l_2\end{array}\right]:-\frac{l_1}{z_\mathrm{max}}
\leq
l_2\leq -\frac{l_1}{z_\mathrm{min}}\right\}.
\]
  If $z_\mathrm{max}>0$ and $z_\mathrm{min}=0$, then
\[ 
\mathcal{L}(Y_1)=\left\{\left[\begin{array}{c}l_1\\l_2\end{array}\right]:-\frac{l_1}{z_\mathrm{max}}
\leq
l_2 \text{ and } l_1\geq0\right\}.
\]
\end{prop}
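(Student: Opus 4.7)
The plan is to reduce the defining condition $Y_1l\geq 0$ to a two-inequality system and then solve that system case by case. Using the setup described just before the proposition (together with Lemma \ref{lem:e}), I would take $Y_1=[\,e \ \ z\,]$, where $e$ is the all-ones Perron eigenvector and $z\in\mathbb{R}^m$ is a real eigenvector for $\lambda_2$. Then for $l=[l_1,l_2]^T\in\mathbb{R}^2$, the $i$-th component of $Y_1l$ is $l_1+l_2z_i$, so $l\in\mathcal{L}(Y_1)$ is exactly the simultaneous requirement
\[
    l_1+l_2z_i\geq 0,\qquad i=1,2,\ldots,m.
\]

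The next step is to notice that the map $t\mapsto l_1+l_2t$ is affine in $t$, so its minimum over the finite set $\{z_1,\ldots,z_m\}$ is attained at one of the two extreme values. Consequently the system above is equivalent to the two extremal inequalities
\[
    l_1+l_2z_\mathrm{max}\geq 0 \quad\text{and}\quad l_1+l_2z_\mathrm{min}\geq 0.
\]
This reduction is the only ``conceptual'' step and is essentially immediate; the remainder is sign-bookkeeping.

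To finish the first case, where $z_\mathrm{max}>0$ and $z_\mathrm{min}<0$, I would divide the first inequality by $z_\mathrm{max}$ (preserving direction) to obtain $l_2\geq-l_1/z_\mathrm{max}$, and divide the second by $z_\mathrm{min}$ (which reverses direction, since $z_\mathrm{min}<0$) to obtain $l_2\leq-l_1/z_\mathrm{min}$. Combining yields the stated double inequality. For the second case, where $z_\mathrm{max}>0$ and $z_\mathrm{min}=0$, the second extremal inequality collapses to $l_1\geq 0$, while the first gives $l_2\geq-l_1/z_\mathrm{max}$, which is exactly the stated characterisation.

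The main obstacle here is more a matter of care than depth: one must justify the reduction to the two extremal inequalities (immediate from affineness) and then keep the direction of each inequality straight when dividing by $z_\mathrm{max}$ or $z_\mathrm{min}$, particularly because the sign of $z_\mathrm{min}$ differs in the two cases. No deeper argument is needed.
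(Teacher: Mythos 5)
Your proof is correct. The paper states Proposition~\ref{prop:Cones1} as a citation from \cite{SmigocSubmatrixConstruction} and does not reprove it, so there is no in-paper proof to compare against, but your direct argument---writing $(Y_1l)_i = l_1 + l_2 z_i$, reducing to the two extremal inequalities at $z_\mathrm{max}$ and $z_\mathrm{min}$ by affineness, and then solving case by case with attention to the sign of $z_\mathrm{min}$---is the natural and essentially canonical way to establish the characterisation.
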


\begin{prop}\textup{\bf\cite{SmigocSubmatrixConstruction}}\label{prop:Cones2}
  \[
  \mathcal{K}(Y_1)=\left\{\left[\begin{array}{c}k_1\\k_2\end{array}\right]:z_\mathrm{min}k_1\leq
k_2\leq z_\mathrm{max}k_1\right\}.
  \]
\end{prop}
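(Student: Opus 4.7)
The plan is to prove the two inclusions of the claimed equality separately. Writing $Y_1 = [e \mid z]$, the defining condition $k^T = h^T Y_1$ with $h \geq 0$ unpacks to: there exists $h \in \mathbb{R}^m$ with $h \geq 0$, $k_1 = \sum_{i=1}^m h_i$, and $k_2 = \sum_{i=1}^m h_i z_i$.

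For the forward inclusion, given such an $h$, I would simply use the entrywise bounds $z_{\min} \leq z_i \leq z_{\max}$ and the nonnegativity of $h_i$: multiplying by $h_i$ and summing over $i$ yields
\[
z_{\min} \sum_{i=1}^m h_i \;\leq\; \sum_{i=1}^m h_i z_i \;\leq\; z_{\max} \sum_{i=1}^m h_i,
\]
which is precisely $z_{\min} k_1 \leq k_2 \leq z_{\max} k_1$.

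For the reverse inclusion, given $(k_1,k_2)$ satisfying these inequalities, the natural construction is to take $h$ supported on just two coordinates. Choose indices $i_{\max}$ and $i_{\min}$ with $z_{i_{\max}} = z_{\max}$ and $z_{i_{\min}} = z_{\min}$ (these indices are distinct because $z_{\max} > 0 \geq z_{\min}$ forces $z_{\max} \neq z_{\min}$), set $h_{i_{\max}} = a$, $h_{i_{\min}} = b$, and all other entries of $h$ to zero. Then the required equations become the $2 \times 2$ system
\[
a + b = k_1, \qquad a z_{\max} + b z_{\min} = k_2,
\]
whose determinant $z_{\max} - z_{\min} > 0$ is nonzero. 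Solving gives $a = (k_2 - z_{\min} k_1)/(z_{\max} - z_{\min})$ and $b = (z_{\max} k_1 - k_2)/(z_{\max} - z_{\min})$, and both are nonnegative exactly under the hypothesis $z_{\min} k_1 \leq k_2 \leq z_{\max} k_1$.

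There is no real obstacle; the argument is a direct two-variable linear solve. Geometrically, $\mathcal{K}(Y_1)$ is the conic hull of the rows $(1, z_i)$ of $Y_1$, and because all first coordinates equal $1$, the extreme rays of this cone correspond to the largest and smallest second coordinates, namely $z_{\max}$ and $z_{\min}$ — which is why a two-sparse $h$ suffices.
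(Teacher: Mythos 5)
Your proof is correct, and it is the natural argument. The forward inclusion is a direct consequence of $h \geq 0$ and the entrywise bounds $z_{\min} \leq z_i \leq z_{\max}$; the reverse inclusion is an explicit construction exhibiting $\mathcal{K}(Y_1)$ as the conic hull of its two extreme generators $(1, z_{\max})^T$ and $(1, z_{\min})^T$. A couple of small points worth registering: the $2\times2$ system you write down has determinant $z_{\min} - z_{\max}$ (negative), not $z_{\max} - z_{\min}$, but of course this is still nonzero and your formulas for $a$ and $b$ are the correct solutions; and the hypothesis $z_{\min} k_1 \leq k_2 \leq z_{\max} k_1$ already forces $k_1 \geq 0$ (since $z_{\min} < z_{\max}$), which is consistent with your $a,b \geq 0$ but is a fact worth noticing is implicit. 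This proposition is quoted from \cite{SmigocSubmatrixConstruction} and the present paper does not reproduce a proof, so there is no in-paper argument to compare against; but the two-extreme-ray construction you give is exactly the expected one, and your closing geometric remark correctly explains why a two-sparse $h$ suffices.
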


We now give our $p=2$ construction.

\begin{lem}\label{lem:p=2}
Let the following assumptions hold:
\begin{enumerate}
\item[\textup{(i)}]
the list $\sigma_0:=(\rho,\lambda_2,\lambda_3,\ldots,\lambda_m)$ is realisable, where $\rho$ is
the Perron eigenvalue, $\lambda_2$ is real and $\rho\neq \lambda_2$;

\item[\textup{(ii)}]
$C'$ is a $2\times2$ matrix of the form
\[
  C':=\left[\begin{array}{cc}\gamma&1\\b_2&b_1+\gamma\end{array}\right],
\]
where $b_1$ is real, $\gamma=(\rho+\lambda_2-b_1)/2\geq0$ and
$b_2=\left((\rho-\lambda_2)^2-b_1^2\right)/4$;

\item[\textup{(iii)}]
$K':=[\begin{array}{cc}f&g\end{array}]$, where $f,g\in\mathbb{R}^{n-2}$, $g\geq0$ and
$f\geq(\gamma-\lambda_2)g$;

\item[\textup{(iv)}]
$L':=\left[\begin{array}{c}c^T\\d^T\end{array}\right]$, where $c,d\in\mathbb{R}^{n-2}$, $c\geq0$
and $d\geq(\rho-\gamma)c$;

\item[\textup{(v)}]
$A$ is an $(n-2)\times(n-2)$ nonnegative matrix;

\item[\textup{(vi)}]
$M'$ is the $n\times n$ matrix defined by
\[
	M':=\left[\begin{array}{cc}A&K'\\L'&C'\end{array}\right].
\]

\end{enumerate}
Then the list
$
	(\sigma(M'),\lambda_3,\lambda_4,\ldots,\lambda_m)
$
is realisable.
\end{lem}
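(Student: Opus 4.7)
The plan is to apply Theorem~\ref{thm:NonnegativityConditions} with $p = 2$, taking the $B$ in that theorem to be a realising matrix for $\sigma_0$ and setting $C = C'$. First, I would invoke Lemma~\ref{lem:e} to assume $Be = \rho e$ and pick a real eigenvector $z$ of $B$ corresponding to $\lambda_2$; after rescaling (and possibly negating) $z$, I may arrange that $z_\mathrm{max} > 0$ and $z_\mathrm{min} \leq 0$. Since $\operatorname{span}(e, z)$ is $B$-invariant and the restriction of $B$ to it has spectrum $\{\rho, \lambda_2\}$, the right $Y_1$ to take is
\[
	Y_1 := \left[\,(\gamma - \lambda_2)\,e + \tfrac{\rho - \gamma}{z_\mathrm{max}}\,z,\ \ e - \tfrac{1}{z_\mathrm{max}}\,z\,\right],
\]
which a direct computation (using $2\gamma + b_1 = \rho + \lambda_2$ together with the factorisation $b_2 = (\rho - \gamma)(\gamma - \lambda_2)$) shows satisfies $BY_1 = Y_1 C'$. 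Extending $Y_1$ to an invertible $Y = [Y_1\ Y_2]$ then yields $Y^{-1} B Y = \begin{pmatrix} C' & E \\ 0 & F \end{pmatrix}$ with $\sigma(F) = (\lambda_3, \ldots, \lambda_m)$.

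The core of the proof is to check that the constraints (iii) and (iv) imply the cone membership hypotheses of Theorem~\ref{thm:NonnegativityConditions} for this specific $Y_1$. For (iv), I would take a column $(c, d)^T$ of $L'$ and compute
\[
	\bigl(Y_1\,(c, d)^T\bigr)_i \;=\; \bigl[c(\gamma - \lambda_2) + d\bigr] \;-\; \tfrac{d - (\rho - \gamma) c}{z_\mathrm{max}}\,z_i.
\]
This is linear in $z_i$; at $z_i = z_\mathrm{max}$ it simplifies to $c(\rho - \lambda_2) \geq 0$, while at $z_i = z_\mathrm{min} \leq 0$ both summands are nonnegative (using $d - (\rho - \gamma) c \geq 0$ and $c(\gamma - \lambda_2) + d \geq c(\rho - \lambda_2) \geq 0$). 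Linearity extends the inequality to every entry of $z$, placing each column of $L'$ in $\mathcal{L}(Y_1)$.

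For (iii), I would show that each row $(f_i, g_i)$ of $K'$ equals $h^T Y_1$ for some $h \geq 0$. Writing $a := h^T e$ and $b := h^T z$, the system $h^T Y_1 = (f_i, g_i)$ has the unique solution
\[
	a \;=\; \tfrac{f_i + (\rho - \gamma) g_i}{\rho - \lambda_2}, \qquad b \;=\; z_\mathrm{max}(a - g_i).
\]
After substituting $f_i = (\gamma - \lambda_2) g_i + f_i'$ with $f_i' \geq 0$, the three constraints $a \geq 0$, $b \leq z_\mathrm{max} a$, and $b \geq z_\mathrm{min} a$ reduce respectively to $(\rho - \lambda_2) g_i + f_i' \geq 0$, $z_\mathrm{max} g_i \geq 0$, and $-z_\mathrm{min}(\rho - \lambda_2) g_i + (z_\mathrm{max} - z_\mathrm{min}) f_i' \geq 0$, all of which are manifest. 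An actual nonnegative $h$ realising this $(a, b)$ then exists by concentrating mass on indices where $z$ attains $z_\mathrm{max}$ and $z_\mathrm{min}$, exactly as in Proposition~\ref{prop:Cones2}.

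The main obstacle is pinpointing the right $Y_1$: the choice is forced by the requirement that the algebraic constraints in (iii) and (iv) translate exactly into the cone memberships $L' \in \mathcal{L}(Y_1)$ and $K' \in \mathcal{K}(Y_1)$, and in particular the $1/z_\mathrm{max}$ rescaling of the $z$-component is essential for the computations above to close. Once $Y_1$ is in hand, the cone verifications and the final appeal to Theorem~\ref{thm:NonnegativityConditions} are mechanical; one small detail to dispatch is why $z$ can be normalised to satisfy $z_\mathrm{max} > 0$ and $z_\mathrm{min} \leq 0$, which uses $\rho \neq \lambda_2$ to rule out a strictly positive eigenvector for $\lambda_2$.
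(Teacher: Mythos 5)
Your proof is correct, and it does take a genuinely different (though closely related) route from the paper's. The paper keeps $C = \operatorname{diag}(\rho, \lambda_2)$ and the standard basis $Y_1 = [e\ z]$ (with $z$ normalised so $z_\mathrm{max}=1$), so that Propositions~\ref{prop:Cones1} and \ref{prop:Cones2} describe $\mathcal{L}(Y_1)$ and $\mathcal{K}(Y_1)$ directly; it then diagonalises $C'$ via an explicit $X$ with $X^{-1}C'X = C$, sets $M := \operatorname{diag}(I,X)^{-1}M'\operatorname{diag}(I,X)$, and checks $M \in \mathcal{M}_n(Y_1, C)$. You instead absorb $X$ into $Y_1$: your $Y_1$ satisfies $BY_1 = Y_1 C'$ outright, so you take $C := C'$ in Lemma~\ref{lem:SubmatrixConstruction} and verify $M' \in \mathcal{M}_n(Y_1, C')$ with no intervening similarity. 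This is cleaner in that it works with $M'$ directly, at the modest cost of re-deriving the cone inequalities by hand rather than reading them off Propositions~\ref{prop:Cones1}--\ref{prop:Cones2} (your linearity-at-the-endpoints argument for $\mathcal{L}$ and the $(a,b)$-elimination for $\mathcal{K}$ are both sound, and the identity $b_2 = (\rho-\gamma)(\gamma-\lambda_2)$ you use to verify $BY_1 = Y_1 C'$ checks out). Both proofs ultimately rest on Theorem~\ref{thm:NonnegativityConditions} and on the same underlying cone geometry; the paper's version trades a change of variable for the convenience of the pre-packaged propositions, while yours trades the propositions for a better-adapted basis. One minor remark: the normalisation $z_\mathrm{max}>0$, $z_\mathrm{min}\le 0$ is something the paper delegates to \cite{SmigocSubmatrixConstruction}, so your brief gesture at it is appropriate, but the clean justification is simply that if $z$ had one sign entrywise one could (after negating) take $z>0$, and then $z$ would necessarily be a Perron vector, forcing $\lambda_2 = \rho$; your one-line remark captures the idea.
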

\begin{proof}
  Let $B$ be a nonnegative matrix with spectrum $\sigma_0$. As in the construction of Lemma
\ref{lem:SubmatrixConstruction}, let $Y$ be an invertible matrix
such that
\[
  Y^{-1}BY=\left[\begin{array}{cc}C&*\\0&*\end{array}\right],
\]
where
\[
  C:=\left[\begin{array}{cc}\rho&0\\0&\lambda_2\end{array}\right].
\]
By Lemma \ref{lem:e}, we may assume without loss of generality that the Perron eigenvector of $B$
is $e$. Let $z$ be a real eigenvector of $B$ corresponding to $\lambda_2$, appropriately scaled so
that $z_\mathrm{max}=1$ and $z_\mathrm{min}\leq0$ (see the discussion preceding Proposition \ref{prop:Cones1}) and let us write $Y=[\begin{array}{cc}Y_1&Y_2\end{array}]$, where $Y_1=[\begin{array}{cc}e&z\end{array}]$.

Note that the definitions of $\gamma$ and $b_2$ assure
$
  \sigma(C')=(\rho,\lambda_2).
$
Therefore, since $\rho$ and $\lambda_2$ are distinct, we may diagonalise $C'$. Indeed, $X^{-1}C'X=C$, where
\[
  X:=\left[\begin{array}{cc}1&1\\\rho-\gamma&\lambda_2-\gamma\end{array}\right]
  \:\text{ and }\:
X^{-1}=\frac{1}{\rho-\lambda_2}\left[\begin{array}{cc}-\lambda_2+\gamma&1\\\rho-\gamma&-1\end{array}
\right].
\]
Now define
\[
  K:=K'X=\left[
    \begin{array}{cc}
      f+(\rho-\gamma)g & f+(\lambda_2-\gamma)g
    \end{array}
  \right],
\]
\[
  L:=X^{-1}L'=\frac{1}{\rho-\lambda_2}
  \left[
    \begin{array}{c}
      (-\lambda_2+\gamma)c^T+d^T \\
      (\rho-\gamma)c^T-d^T
    \end{array}
  \right]
\]
and
\[
  M:=\left[\begin{array}{cc}A&K\\L&C\end{array}\right].
\]
We will show that $M\in\mathcal{M}_n(Y_1,C)$ and that $M$ and $M'$ are similar (and hence cospectral). The result will then follow by Theorem \ref{thm:NonnegativityConditions}.

To see that $M\in\mathcal{M}_n(Y_1,C)$, we first note that since $g\geq0$ and
$f\geq(\gamma-\lambda_2)g\geq(\gamma-\rho)g$, we have
\[
  z_\mathrm{min}\left( f+(\rho-\gamma)g \right)\leq0\leq
f+(\lambda_2-\gamma)g\leq f+(\rho-\gamma)g
\]
 and hence, by Proposition \ref{prop:Cones2}, the transpose of every row of $K$ lies in
$\mathcal{K}(Y_1)$. Similarly, since $c\geq0$ and $d\geq(\rho-\gamma)c\geq(\lambda_2-\gamma)c$, we have that
\[
	-\left( (-\lambda_2+\gamma)c+d
\right)\leq(\rho-\gamma)c-d\leq0\leq-\frac{1}{z_\mathrm{min}}\left(
(-\lambda_2+\gamma)c+d \right),
\]
where the right-most inequality holds provided $z_\mathrm{min}\neq0$. Then, by Proposition
\ref{prop:Cones1}, every column of $L$ lies in $\mathcal{L}(Y_1)$.

Therefore, we have shown that $M\in\mathcal{M}_n(Y_1,C)$. Finally, it is easy to see that $M$ and $M'$ are similar:
\[
  M=\left[
    \begin{array}{cc}
      I & 0 \\
      0 & X
    \end{array}
  \right]^{-1}M'
  \left[
    \begin{array}{cc}
      I & 0 \\
      0 & X
    \end{array}
  \right].\qedhere
\]
\end{proof}

In the proof of Lemma \ref{lem:p=2}, we have shown that $M'$ is similar to a matrix in
$\mathcal{M}_n(Y_1,C)$. In the applications of this lemma, we will choose $A$, $K'$ and $L'$ in such
a way that $M'$ has a structure which makes its characteristic polynomial easy to compute. Several
such structured matrices---such as companion matrices, doubly companion matrices and block companion matrices---have been studied in the context of the NIEP, for example by Friedland, Laffey, \v{S}migoc and Cronin \cite{Friedland}, \cite{LSStructured}, \cite{Cronin} and indeed, the form of the matrix $C'$ in Lemma \ref{lem:p=2} has been chosen with such matrices in mind.

For example, letting
\begin{equation}\label{eq:AForCompanion}
  A=\left[\begin{array}{cccc}
    \gamma & 1 & &\\
    & \gamma & \ddots &\\
    & & \ddots & 1\\
    & & & \gamma
  \end{array}\right],
\end{equation}
$d\geq0$, $f=[\begin{array}{ccccc}0&0&\cdots&0&1\end{array}]^T$ and $c=g=0$, the matrix $M'$ becomes a companion matrix plus a scalar and as such, the characteristic polynomial of $M'$ is easy to write down. The case where $M'$ is a companion matrix plus a scalar is developed formally in Theorem \ref{thm:Basicp=2}.

Alternatively, keeping $c$, $d$, $f$ and $g$ as above, but setting
\[
	A=\left[
		\begin{array}{ccccccc}
			\gamma & 1 &  &  &  &  &  \\
			& \ddots & \ddots &  &  &  &  \\
			&  & \gamma & 1 &  &  &  \\
			* & * & \cdots & * & 1 &  &  \\
			&  &  &  & \gamma & \ddots &  \\
			&  &  &  &  & \ddots & 1 \\
			&  &  &  &  &  & \gamma \\
		\end{array}
	\right],
\]
the matrix $M'$ becomes a 2-block companion matrix plus a scalar.

Taking $f$, $g$ and $d$ as above, $c=[\begin{array}{ccccc}*&0&0&\cdots&0\end{array}]^T$ and
\[
	A=\left[
		\begin{array}{cccc}
			* & 1 & & \\
			* & \gamma & \ddots & \\
			\vdots & & \ddots & 1 \\
			* & & & \gamma
		\end{array}
	\right],
\]
then $M'$ becomes a doubly companion matrix plus a scalar.

\begin{ex}\label{ex:DoublyCompanion}
	Let $\sigma$ be any list such that $(8,2,\sigma)$ is realisable. In Lemma \ref{lem:p=2}, let
us take $\rho=8$, $\lambda_2=2$, $b_1=10$ and $n=4$. It is easily verified that the matrices
	\[
		K':=\left[ \begin{array}{cc} 0 & 0\\ 1 & 0 \end{array} \right],
		\:\:
		L':=\left[ \begin{array}{cc} 42 & 0\\ 336 & 28 \end{array} \right]
		\:\text{ and }\:
		A:=\left[ \begin{array}{cc} 0 & 1\\ 3 & 0 \end{array} \right]
	\]
	satisfy the hypotheses of the lemma and the matrix $M'$ of the lemma then becomes
	\[
		M'=\left[
\begin{array}{cccc}
 0 & 1 & 0 & 0 \\
 3 & 0 & 1 & 0 \\
 42 & 0 & 0 & 1 \\
 336 & 28 & -16 & 10
\end{array}
\right].
	\]
	$M'$ is a doubly companion matrix with characteristic polynomial
	\begin{eqnarray*}
		w(x)&=&x^4-10 x^3+13 x^2-40 x+36\\
		&=& (x-9)(x-1)(x^2+4)
	\end{eqnarray*}
	and hence the list $(9,1,2i,-2i,\sigma)$ is realisable.
\end{ex}

\begin{ex}
	Let $\sigma$ be any list such that $(8,-2,\sigma)$ is realisable. In Lemma \ref{lem:p=2},
take $\rho=8$, $\lambda_2=-2$, $b_1=6$ and $n=7$. Then the matrix
\[
	M'=\left[\begin{array}{cc}A&K'\\L'&C'\end{array}\right]=
\begin{pmat}[{....|.}]
 0 & 1 & 0 & 0 & 0 & 0 & 0 \cr
 0 & 0 & 1 & 0 & 0 & 0 & 0 \cr
 \frac{296}{29} & \frac{5}{29} & 0 & 1 & 0 & 0 & 0 \cr
 0 & 0 & 0 & 0 & 1 & 0 & 0 \cr
 0 & 0 & 0 & 0 & 0 & 1 & 0 \cr\-
 0 & 0 & 0 & 0 & 0 & 0 & 1 \cr
 \frac{17024}{29} & \frac{30016}{29} & \frac{15872}{29} & 0 & 0 & 16 & 6 \cr
\end{pmat}
\]
satisfies the hypotheses of the lemma. $M'$ is an example of a 2-block companion matrix. Its
characteristic polynomial is
\[
	w(x)=\frac{1}{29}\left(29 x^2-203 x-266\right)\left(x^4+64\right)(x+1)
\]
and hence the list
\[
	(8.128\ldots,-1.128\ldots,2+2i,2-2i,-2+2i,-2-2i,-1,\sigma)
\]
is realisable.
\end{ex}

\begin{thm}\label{thm:Basicp=2}
Let the list $\sigma_0:=(\rho,\lambda_2,\lambda_3,\ldots,\lambda_m)$ be realisable, where $\rho$ is
the Perron eigenvalue, $\lambda_2$ is real and $\rho\neq \lambda_2$. Let $b_1$ be any real number such that
\begin{equation}\label{eq:GammaCondition}
	\gamma:=\frac{\rho+\lambda_2-b_1}{2}\geq0,
\end{equation}
let
\begin{equation}\label{eq:b2Condition}
	b_2:=\frac{(\rho-\lambda_2)^2-b_1^2}{4}
\end{equation}
and let $b_3,b_4,\ldots,b_n$ be any nonnegative numbers. Then the list
\[
	(\mu_1,\mu_2,\ldots,\mu_n,\lambda_3,\lambda_4,\ldots,\lambda_m)
\]
is realisable, where $\mu_1,\ldots,\mu_n$ are the roots of the polynomial
\[
  w(x):=(x-\gamma)^n-b_1(x-\gamma)^{n-1}-b_2(x-\gamma)^{n-2}-\cdots-b_{n-1}(x-\gamma)-b_n.
\]
\end{thm}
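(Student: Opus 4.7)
The plan is to apply Lemma~\ref{lem:p=2} with the choices of $A$, $K'$, $L'$ suggested in the paragraph following (\ref{eq:AForCompanion}), so that $M'$ becomes a companion matrix shifted by $\gamma I_n$. Specifically, I would take $A$ to be the $(n-2)\times(n-2)$ upper-bidiagonal matrix in (\ref{eq:AForCompanion}) with $\gamma$'s on the diagonal and $1$'s on the superdiagonal, together with $g=0$, $f=[0,0,\ldots,0,1]^T$, $c=0$, and $d=[b_n,b_{n-1},\ldots,b_3]^T$. The block $C'$ and the scalars $\gamma$, $b_2$ are taken verbatim from the theorem's hypotheses, which is exactly the content of Lemma~\ref{lem:p=2}(ii).

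The next step is to verify the remaining hypotheses of Lemma~\ref{lem:p=2}. For (iii): since $g=0$ and $f\geq 0$, both $g\geq 0$ and $f\geq(\gamma-\lambda_2)g=0$ hold trivially. For (iv): $c=0\geq 0$ and $d\geq 0=(\rho-\gamma)c$, the last inequality holding because $b_i\geq 0$ for every $i\geq 3$. For (v): $A\geq 0$ since $\gamma\geq 0$. Hypothesis (i) is given, and $\rho\neq\lambda_2$ (used inside the proof of Lemma~\ref{lem:p=2} to diagonalise $C'$) is also given.

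It then remains to identify $\sigma(M')$. A direct inspection shows that $M'-\gamma I_n$ is the standard companion matrix with $1$'s along the superdiagonal and bottom row $[b_n,b_{n-1},\ldots,b_3,b_2,b_1]$; its characteristic polynomial is $x^n-b_1x^{n-1}-b_2x^{n-2}-\cdots-b_n$. Substituting $x\mapsto x-\gamma$ therefore gives the characteristic polynomial of $M'$ as the polynomial $w(x)$ defined in the theorem statement, so $\sigma(M')=(\mu_1,\mu_2,\ldots,\mu_n)$. Lemma~\ref{lem:p=2} then delivers the realisability of $(\mu_1,\ldots,\mu_n,\lambda_3,\ldots,\lambda_m)$.

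I do not anticipate a real obstacle: the theorem is essentially a formal packaging of the companion-matrix construction sketched after (\ref{eq:AForCompanion}). The only points demanding care are the correct ordering of $b_3,\ldots,b_n$ inside the vector $d$, so that the companion-matrix coefficients line up after the shift $x\mapsto x-\gamma$, and the observation that $b_1$ and $b_2$ sit inside the $C'$-block rather than $L'$, which is why they may be arbitrary real numbers subject only to (\ref{eq:GammaCondition}) and (\ref{eq:b2Condition}), while nonnegativity is imposed only on $b_3,\ldots,b_n$.
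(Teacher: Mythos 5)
Your proposal is correct and follows essentially the same route as the paper: it makes the identical choices of $A$, $f$, $g$, $c$, $d$ and invokes Lemma~\ref{lem:p=2}, then recognises $M'-\gamma I_n$ as the companion matrix with bottom row $[b_n,\ldots,b_1]$ and characteristic polynomial $x^n-b_1x^{n-1}-\cdots-b_n$, so that $M'$ has characteristic polynomial $w(x)$. The paper's proof is a terser version of yours and leaves the verification of hypotheses (iii)--(v) implicit, but the substance is the same.
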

\begin{proof}
	In Lemma \ref{lem:p=2}, let $A$ be as in (\ref{eq:AForCompanion}) and let
$d=[\begin{array}{cccc}b_n&b_{n-1}&\cdots&b_3\end{array}]^T$, $f=[\begin{array}{ccccc}0&0&\cdots&0&1\end{array}]^T$ and $c=g=0$. Then, note that $M'-\gamma I_n$ becomes a companion matrix (where $M'$ is defined in the statement of the lemma) and as such it has characteristic polynomial $w(x+\gamma)$. Hence $M'$ has characteristic polynomial $w(x)$.
\end{proof}

\begin{ex}\label{ex:p=2a}
	Let $\sigma$ be any list such that $(4,2,\sigma)$ is realisable. Taking $\rho=4$ and
$\lambda_2=2$ in Theorem \ref{thm:Basicp=2}, let us choose $n=4$, $b_1=6$, $b_3=10$ and $b_4=25$. Then, the polynomial $w(x)$ of the theorem becomes
	\begin{eqnarray*}
		w(x)&=&x^4-6x^3+8x^2-10x-25\\
		&=&(x-5)(x^2-2x+5)(x+1)
	\end{eqnarray*}
	and so the list $(5,1+2i,1-2i,-1,\sigma)$ is realisable.
\end{ex}

At this point, we wish to use Theorem \ref{thm:LS} in conjunction with Theorem \ref{thm:Basicp=2} to
produce a class of spectra which may replace the eigenvalues $\rho$ and $\lambda_2$; however,
Theorem \ref{thm:LS} deals with realisation by matrices of the form $G+\gamma I_n$, where $G$ has trace zero and so applying this directly would correspond to taking $b_1=0$ in Theorem \ref{thm:Basicp=2}. With this in mind, we will present a slight modification of Theorem \ref{thm:LS}, in which we examine realisation by a matrix of the form $G+\gamma I_n$, where $G$ may have nonzero trace. First, we will require a lemma from \cite{LaffeySmigoc}:

\begin{lem}\textup{\bf\cite{LaffeySmigoc}}\label{lem:LS}
	Let $b_1\geq0$ and let $(\lambda_2,\lambda_3,\ldots,\lambda_n)$ be a list of complex numbers, closed under complex conjugation and with nonpositive real parts. Set $\rho:=b_1-\lambda_2-\lambda_3-\cdots-\lambda_n$ and
	\[
		f(x):=(x-\rho)\prod_{i=2}^n(x-\lambda_i)=x^n-b_1x^{n-1}-b_2x^{n-2}-\cdots-b_n.
	\]
	Then $b_2\geq0$ implies $b_i\geq0$ for all $i=3,4,\ldots,n$.
\end{lem}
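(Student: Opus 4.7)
Write $g(x) := f(x)/(x-\rho) = \prod_{i=2}^n(x-\lambda_i)$. Since the $\lambda_i$ are closed under complex conjugation, $g(x)$ has real coefficients; and since each $\lambda_i$ has nonpositive real part, $g(x)$ factors over $\mathbb{R}$ as a product of linear factors $(x+\alpha)$ with $\alpha \geq 0$ and irreducible quadratic factors $(x^2+\beta x+\gamma)$ with $\beta,\gamma \geq 0$. Writing $g(x) = x^{n-1} + g_1 x^{n-2} + \cdots + g_{n-1}$, every coefficient $g_j$ is therefore nonnegative. Comparing coefficients in the identity $f(x)=(x-\rho)g(x)$ yields the fundamental recursion
\[
  b_k = \rho g_{k-1} - g_k, \qquad 1 \leq k \leq n,
\]
with the conventions $g_0=1$ and $g_n=0$. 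In particular, the hypothesis $b_1 \geq 0$ is equivalent to $\rho \geq g_1$, and $b_2 \geq 0$ to $\rho g_1 \geq g_2$.

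I would then prove $b_k \geq 0$ for $k=3,4,\ldots,n$ by induction on $k$. The base case $k=3$ illustrates the mechanism cleanly: since $g$ is a real polynomial with all roots in the closed left half-plane, the classical Routh--Hurwitz inequality gives $g_1g_2 \geq g_3$, and combining with $\rho \geq g_1$ yields $b_3 = \rho g_2 - g_3 \geq g_1 g_2 - g_3 \geq 0$. For the inductive step (with $k>3$), assuming $b_1,\ldots,b_{k-1} \geq 0$, one invokes the analogous higher-order coefficient inequalities satisfied by Hurwitz polynomials (nonnegativity of the leading principal minors of the Hurwitz matrix of $g$) together with the two explicit hypotheses $\rho \geq g_1$ and $\rho g_1 \geq g_2$ to bound $g_k \leq \rho g_{k-1}$ and conclude $b_k \geq 0$. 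A useful auxiliary identity, obtained by multiplying $f(x)=(x-\rho)g(x)$ by $(x+\rho)$ and comparing coefficients, is
\[
  b_k + \rho b_{k-1} = \rho^2 g_{k-2} - g_k,
\]
which allows one to track $b_k$ using two previous $g_j$'s simultaneously.

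\textbf{Main obstacle.} The chief difficulty is that the coefficient sequence of a Hurwitz polynomial with nonnegative real coefficients is \emph{not} in general log-concave, and the ratios $g_k/g_{k-1}$ need not be monotonically decreasing. This is precisely why the hypothesis $b_2 \geq 0$ is essential in addition to $b_1 \geq 0$: examples (e.g.\ $g(x) = (x+\varepsilon)(x+M)(x^2 + \varepsilon x + M)$ for small $\varepsilon$ and large $M$) show that $g_2/g_1$ may exceed $g_1$, so the bound $\rho \geq g_1$ alone does not control $g_k/g_{k-1}$ for larger $k$. What must ultimately be established is that $\max_k g_k/g_{k-1} \leq \max(g_1,\, g_2/g_1) \leq \rho$, and this uniform bound on the ratios is where the Routh--Hurwitz determinantal conditions on $g$ enter in an essential way; careful bookkeeping of these conditions (and the degenerate case $g_1=0$, in which $b_2 \geq 0$ forces $g_2=0$ and the hypotheses propagate trivially) is the technical core of the argument.
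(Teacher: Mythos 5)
Your setup is sound: writing $g(x)=\prod_{i=2}^n(x-\lambda_i)=x^{n-1}+g_1x^{n-2}+\cdots+g_{n-1}$ with all $g_j\geq0$, the recursion $b_k=\rho g_{k-1}-g_k$ is correct, and your verification of $b_3\geq0$ via the Hurwitz inequality $g_1g_2\geq g_3$ is valid. But as you candidly acknowledge in your ``main obstacle'' paragraph, this is a proof plan rather than a proof: the entire inductive step for $k>3$ is left as an invocation of ``careful bookkeeping'' of the higher Routh--Hurwitz minors, and the precise statement you would need to extract from them --- that $g_k/g_{k-1}\leq\max(g_1,g_2/g_1)$ for all $k$ --- is itself a nontrivial inequality that you have not proved. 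It is not at all evident how to coax this uniform ratio bound out of the Hurwitz determinants: $\Delta_2\geq0$ gives $g_3/g_2\leq g_1$, and $\Delta_3\geq0$ can be massaged (when $g_1,g_3>0$) into $g_4/g_3\leq g_2/g_1$, but already for degree five one must combine several determinantal inequalities in a way that depends on the parity of $k$ and on whether various $g_j$ vanish, and no general mechanism is presented. (Also, a small factual slip: in your parenthetical example $g(x)=(x+\varepsilon)(x+M)(x^2+\varepsilon x+M)$ with $\varepsilon$ small and $M$ large, one has $g_1\approx M$ and $g_2/g_1\approx1$, so $g_2/g_1$ does \emph{not} exceed $g_1$ there --- though the phenomenon you want does occur for, say, $g(x)=x^2+x+10$.) As written, the heart of the lemma remains unproved.

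A cleaner route, which avoids Hurwitz determinants altogether, is an induction on the degree by peeling off one irreducible real factor of $g$. Factor $g=h_1h_2\cdots h_r$ into monic linear factors $x+c$ ($c\geq0$) and irreducible quadratics $x^2+dx+e$ ($d,e\geq0$), and for each factor $h_k$ set $g^{(k)}:=g/h_k$ and $f^{(k)}:=(x-\rho)g^{(k)}=x^{\deg f^{(k)}}-b_1^{(k)}x^{\deg f^{(k)}-1}-\cdots$. A direct coefficient comparison in $f=h_k f^{(k)}$ shows that $b_1^{(k)}=b_1+s_k\geq0$ (where $s_k$ is the linear coefficient of $h_k$) and, crucially, that if all the $b_j^{(k)}$ are nonnegative then so are all $b_j$ with $j\geq3$. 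So it suffices to find \emph{one} factor $h_k$ with $b_2^{(k)}\geq0$ and apply the inductive hypothesis to $f^{(k)}$. The key computation is that, summing over all factors,
\[
\sum_{k=1}^r b_2^{(k)} \;=\; (r-1)\,b_2+\sum_{k<l}s_ks_l\;\geq\;0,
\]
so when $r\geq2$ at least one summand is nonnegative, and when $r=1$ the conclusion is immediate ($n\leq3$, where $b_3=\rho g_2\geq0$). This averaging step is the idea that is missing from your argument; without it, or without a completed Hurwitz-determinant analysis, the lemma is not established.
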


\begin{thm}\label{thm:NewLS}
  Let $\sigma:=(\rho,\lambda_2,\lambda_3,\ldots,\lambda_n)$ be realisable, where
$\rho$ is the Perron eigenvalue and $\mathrm{Re}\,\lambda_i\leq0$ for all $i=2,3,\ldots,n$. Then for any nonnegative number $b_1$ with $b_1\leq s_1(\sigma)$ and $(n-1)b_1^2\leq ns_2(\sigma)-s_1(\sigma)^2$, $\sigma$ may be realised by a matrix of the form $G+\gamma I_n$, where $G$ is a nonnegative companion matrix with trace $b_1$ and $\gamma$ is a nonnegative scalar.
\end{thm}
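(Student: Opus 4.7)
The plan is to reduce to Lemma \ref{lem:LS} via a suitable additive shift. The idea is that if I subtract a nonnegative scalar $\gamma$ from every entry of $\sigma$, the shifted list $\sigma'$ still has nonpositive real parts on all entries apart from the (shifted) Perron, and by choosing $\gamma$ so that $s_1(\sigma')=b_1$, the monic polynomial whose roots are $\sigma'$ takes the form
\[
f(x)=x^n-b_1x^{n-1}-b_2x^{n-2}-\cdots-b_n.
\]
Lemma \ref{lem:LS} then promotes the single inequality $b_2\geq 0$ into $b_i\geq 0$ for every $i\geq 2$, giving a nonnegative companion matrix $G$ of trace $b_1$ with spectrum $\sigma'$; the matrix $G+\gamma I_n$ is then the desired nonnegative realization of $\sigma$ of precisely the form claimed.

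Concretely, I would set $\gamma:=(s_1(\sigma)-b_1)/n$, which is nonnegative by the assumption $b_1\leq s_1(\sigma)$, and take $\sigma':=(\rho-\gamma,\lambda_2-\gamma,\ldots,\lambda_n-\gamma)$. Since $\gamma\geq 0$ and $\mathrm{Re}\,\lambda_i\leq 0$ for $i\geq 2$, each $\lambda_i-\gamma$ still has nonpositive real part, and by construction $s_1(\sigma')=b_1\geq 0$. The hypotheses of Lemma \ref{lem:LS} therefore apply to $(\lambda_2-\gamma,\ldots,\lambda_n-\gamma)$ with this choice of $b_1$, and the problem reduces to verifying $b_2\geq 0$.

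The key computation identifies $b_2$ in terms of the original data. Newton's identity gives $s_2(\sigma')=b_1^2+2b_2$, hence $b_2=(s_2(\sigma')-b_1^2)/2$. Combining this with the shift formula $s_2(\sigma')=s_2(\sigma)-2\gamma s_1(\sigma)+n\gamma^2$ and substituting $\gamma=(s_1(\sigma)-b_1)/n$, a short manipulation collapses everything to
\[
2n\,b_2 \;=\; n\,s_2(\sigma)-s_1(\sigma)^2-(n-1)b_1^2,
\]
so that $b_2\geq 0$ is exactly the stated hypothesis $(n-1)b_1^2\leq ns_2(\sigma)-s_1(\sigma)^2$. I do not anticipate a substantive obstacle: the theorem has been tailored so that the numerical condition on $b_1$ is precisely the condition needed for Lemma \ref{lem:LS} to apply. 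The only mild care required is tracking the sign convention in Newton's identity (so that $s_2(\sigma')=b_1^2+2b_2$ rather than $b_1^2-2b_2$) and noting that the shift preserves the nonpositivity of the real parts, both of which are immediate from $\gamma\geq 0$.
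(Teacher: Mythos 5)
Your proposal is correct and matches the paper's proof essentially line for line: the same shift $\gamma=(s_1(\sigma)-b_1)/n$, the same reduction to Lemma~\ref{lem:LS}, and the same Newton-identity computation showing $2n\,b_2=ns_2(\sigma)-s_1(\sigma)^2-(n-1)b_1^2\geq0$. No gaps.
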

\begin{proof}
  Since $\sigma$ is realisable, note that $s_1(\sigma)\geq0$ and the JLL condition $s_1(\sigma)^2\leq ns_2(\sigma)$ holds. Choose any nonnegative $b_1$ such that $b_1\leq s_1(\sigma)$ and $(n-1)b_1^2\leq
ns_2(\sigma)-s_1(\sigma)^2$. Let $\gamma:=(s_1(\sigma)-b_1)/n$,
\[\sigma':=(\rho-\gamma,\lambda_2-\gamma,\lambda_3-\gamma,\ldots,\lambda_n-\gamma)\]
  and
  \[
    g(x):=(x-\rho+\gamma)\prod_{i=2}^n(x-\lambda_i+\gamma).
  \]
  It is clear from the definition of $\gamma$ that $s_1(\sigma')=b_1$. Therefore, we may write
$g(x)$
as
  \[
    g(x)=x^n-b_1x^{n-1}-b_2x^{n-2}-\cdots-b_n.
  \]
  Now, the elements of $\sigma'$ are the roots of $g$ and hence, using Newton's Identities for the roots of a polynomial, we have that
  \begin{eqnarray*}
    b_2 &=& \frac{1}{2}\left( s_2(\sigma')-b_1^2 \right) \\
    &=& \frac{1}{2}\left( s_2(\sigma)-2\gamma s_1(\sigma)+n\gamma^2-b_1^2 \right) \\
    &=& \frac{1}{2n}\left( n s_2(\sigma)-s_1(\sigma)^2-(n-1) b_1^2 \right) \\
    &\geq& 0.
  \end{eqnarray*}
  The complex numbers $\lambda_2-\gamma,\lambda_3-\gamma,\ldots,\lambda_n-\gamma$ have nonpositive real parts and hence by Lemma \ref{lem:LS}, $b_i\geq0$ for all $i=3,4,\ldots,n$.
Therefore, the companion matrix of $g$, $G$ say, is nonnegative, has trace $b_1$ and has spectrum
$\sigma'$. It follows that $G+\gamma I_n$ has spectrum $\sigma$.
\end{proof}

\begin{rem}
	Similarly to the remark following Theorem \ref{thm:LS}, we note that, in the proof of Theorem \ref{thm:NewLS}, it was only required that
$\lambda_2-\gamma,\lambda_3-\gamma,\ldots,\lambda_n-\gamma$ have nonpositive real parts. Therefore, the condition that $\mathrm{Re}\,\lambda_i\leq0$ for all
$i=2,3,\ldots,n$ in the statement of the theorem can be relaxed to
$\mathrm{Re}\,\lambda_i\leq(s_1(\sigma)-b_1)/n$.
\end{rem}

\begin{thm}\label{thm:p=2}
	Let $\sigma_0:=(\rho,\lambda_2,\lambda_3,\ldots,\lambda_m)$ be realisable, where
$\rho$ is the Perron eigenvalue, $\lambda_2$ is real and $\rho\neq \lambda_2$. Let
	\begin{equation}\label{eq:deltaRange}
		(n-2)\max\{0,\lambda_2\}\leq\delta\leq \frac{1}{2}(n-2)(\rho+\lambda_2)
	\end{equation}
	and let $\mu:=(\mu_1,\mu_2,\ldots,\mu_n)$ be a list of complex numbers, closed under complex conjugation, with $\mu_1\geq0$ and
$\text{Re}\,\mu_i\leq \delta/(n-2)$ for all $i=2,3,\ldots,n$. Assume also that
	\begin{equation}\label{eq:s1def}
		s_1(\mu)=\rho+\lambda_2+\delta
	\end{equation}
	and
	\begin{equation}\label{eq:s2def}
		s_2(\mu)=\rho^2+\lambda_2^2+\frac{\delta ^2}{n-2}.
	\end{equation}
	Then the list
	$
		(\mu_1,\mu_2,\ldots,\mu_n,\lambda_3,\lambda_4,\ldots,\lambda_m)
	$
	is realisable.
\end{thm}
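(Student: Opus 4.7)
The plan is to invoke Theorem~\ref{thm:Basicp=2} with parameters chosen so that the polynomial $w(x)$ it produces has root list exactly $(\mu_1,\mu_2,\ldots,\mu_n)$. I will set
\[
\gamma := \frac{\delta}{n-2}, \qquad b_1 := \rho + \lambda_2 - 2\gamma;
\]
these assignments are forced by the relation $\gamma = (\rho+\lambda_2-b_1)/2$ from Theorem~\ref{thm:Basicp=2} together with the requirement that the shifted list $\mu' := (\mu_1-\gamma,\ldots,\mu_n-\gamma)$ satisfy $s_1(\mu') = b_1$. Hypothesis~(\ref{eq:deltaRange}) translates directly into $\gamma \geq 0$ and $0 \leq b_1 \leq \rho-\lambda_2$, and the latter in turn gives $b_2 := ((\rho-\lambda_2)^2 - b_1^2)/4 \geq 0$.

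Next I will write the monic polynomial with root list $\mu'$ as
\[
f(y) = y^n - c_1 y^{n-1} - c_2 y^{n-2} - \cdots - c_n.
\]
Newton's identities give $c_1 = s_1(\mu')$ and $2c_2 = s_2(\mu') - c_1^2$. A direct calculation using~(\ref{eq:s1def}) and~(\ref{eq:s2def}) should yield $c_1 = b_1$ and $c_2 = b_2$; this identification is the algebraic rationale behind the particular form of the hypothesis on $s_2(\mu)$. Since $(\mu_2-\gamma,\ldots,\mu_n-\gamma)$ is closed under complex conjugation and has nonpositive real parts by hypothesis, and since $\mu_1 - \gamma = b_1 - \sum_{i \geq 2}(\mu_i - \gamma)$, the list $\mu'$ meets the hypotheses of Lemma~\ref{lem:LS}. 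Together with $b_1, b_2 \geq 0$ already in hand, the lemma will deliver $c_k \geq 0$ for every $k = 3, 4, \ldots, n$.

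Finally, taking $b_k := c_k$ for $k \geq 3$ satisfies the remaining hypothesis of Theorem~\ref{thm:Basicp=2}, and its polynomial $w(x)$ then satisfies $w(y+\gamma) = f(y)$, so its roots are precisely $\mu_1,\ldots,\mu_n$. The theorem will then yield realisability of $(\mu_1,\ldots,\mu_n,\lambda_3,\ldots,\lambda_m)$. The only substantive step is the identification $c_2 = b_2$; once one spots that the shift $\gamma = \delta/(n-2)$ is forced by $s_1(\mu) = \rho+\lambda_2+\delta$, the rest reduces to bookkeeping, and the peculiar range of $\delta$ in~(\ref{eq:deltaRange}) is seen to be exactly what is needed to make $\gamma, b_1, b_2$ all nonnegative so that Lemma~\ref{lem:LS} and Theorem~\ref{thm:Basicp=2} both apply.
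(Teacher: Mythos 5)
Your proof is correct and follows essentially the same route as the paper: the choices $\gamma=\delta/(n-2)$ and $b_1=\rho+\lambda_2-2\delta/(n-2)$, the Newton-identity computation identifying $c_1=b_1$ and $c_2=b_2$, and the final appeal to Theorem~\ref{thm:Basicp=2} all match. The only organizational difference is that you invoke Lemma~\ref{lem:LS} directly to get nonnegativity of $c_3,\ldots,c_n$, whereas the paper packages the same step through the intermediate Theorem~\ref{thm:NewLS} (after separately checking realisability of $\mu$ via Theorem~\ref{thm:LS} and the JLL bound), so your version is a slight streamlining of the same argument.
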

\begin{proof}
	We will show that $\mu$ is the spectrum of a nonnegative matrix of the form
	\begin{equation}\label{eq:RequiredForm}
      \begin{pmat}[{...|.}]
      \gamma & 1 & & & 0 & 0 \cr
      & \gamma & \ddots & & \vdots & \vdots \cr
      & & \ddots & 1 & 0 & 0 \cr
      & & & \gamma & 1 & 0 \cr\-
      0 & 0 & \cdots & 0 & \gamma & 1 \cr
      b_n & b_{n-1} & \cdots & b_3 & b_2 & b_1+\gamma \cr
   \end{pmat},
	\end{equation}
	where $\gamma$ and $b_2$ satisfy (\ref{eq:GammaCondition}) and (\ref{eq:b2Condition}), respectively. The result will then follow by Theorem \ref{thm:Basicp=2}.

	To see that $\mu$ is realisable, from Theorem \ref{thm:LS} and the remark that follows it, it suffices to check that $s_1(\mu)^2\leq ns_2(\mu)$ and that $\text{Re}\,\mu_i\leq s_1(\mu)/n$ for all $i=2,3,\ldots,n$. For the first of these two conditions, consider $ns_2(\mu)-s_1(\mu)^2$ as a quadratic in $\delta$:
	\[
ns_2(\mu)-s_1(\mu)^2=\frac{2}{n-2}\delta^2-2(\rho+\lambda_2)\delta+(n-1)(\rho^2+\lambda_2^2)-2\rho
\lambda_2.
	\]
The coefficient of $\delta^2$ in this quadratic is positive and its discriminant is
	\[
		-\frac{4 n \left(\rho-\lambda_2\right)^2}{n-2}<0.
	\]
	Therefore, as required, $ns_2(\mu)-s_1(\mu)^2>0$ for all real $\delta$. For the second condition, let
	\begin{equation}\label{eq:b1def}
		b_1:=\rho+\lambda_2-\frac{2 \delta }{n-2}.
	\end{equation}
	For all $\delta$ satisfying (\ref{eq:deltaRange}), we have $0\leq b_1\leq s_1(\mu)$ and equations (\ref{eq:s1def}) and (\ref{eq:b1def}) then give
	\[
		\text{Re}\,\mu_i\leq\frac{\delta}{n-2}=\frac{s_1(\mu)-b_1}{n}\leq\frac{s_1(\mu)}{n},
	\]
	as required and so $\mu$ is realisable.
	
Furthermore, since
\[
	(n-2)\lambda_2\leq\delta\leq\frac{1}{2}(n-2)(\rho+\lambda_2)\leq(n-2)\rho,
\]
we have that
	\[
		ns_2(\mu)-s_1(\mu)^2-(n-1)b_1^2=\frac{2 n\left(\delta -(n-2)
\lambda_2\right)\left((n-2)
\rho-\delta \right) }{(n-2)^2}\geq0,
	\]
	so $b_1$ satisfies the conditions imposed on it by Theorem \ref{thm:NewLS}. Hence, by Theorem \ref{thm:NewLS} and the remark that follows it, $\mu$ may be realised by
a nonnegative matrix of the form (\ref{eq:RequiredForm}) and so $\mu_1,\mu_2,\ldots,\mu_n$ are the
roots of a polynomial of the form
	\[
		w(x):=(x-\gamma)^n-b_1(x-\gamma)^{n-1}-b_2(x-\gamma)^{n-2}-\cdots-b_{n-1}(x-\gamma)-b_n,
	\]
	where
	\begin{equation}\label{eq:gamma=}
		\gamma=\frac{s_1(\mu)-b_1}{n}.
	\end{equation}
	
	So it remains to show that $\gamma$ and $b_2$ satisfy (\ref{eq:GammaCondition}) and (\ref{eq:b2Condition}). To see this, consider the list
	\[
		\mu':=(\mu_1-\gamma,\mu_2-\gamma,\ldots,\mu_n-\gamma)
	\]
	and the polynomial
	\[
		w'(x):=x^n-b_1x^{n-1}-b_2x^{n-2}-\cdots-b_{n-1}x-b_n.
	\]
	The elements of $\mu'$ are the roots of $w'$ and so, using Newton's Identities for the roots of a polynomial, we have that
  \begin{align}
    b_2 &\:=\: \frac{1}{2}\left( s_2(\mu')-b_1^2 \right) \notag \\
    &\:=\: \frac{1}{2}\left( s_2(\mu)-2\gamma s_1(\mu)+n\gamma^2-b_1^2 \right).\label{eq:b2=}
  \end{align}
  Now, by eliminating $\delta$ from (\ref{eq:s1def}) and (\ref{eq:b1def}), we see that
  \begin{equation}\label{eq:s1=}
  	s_1(\mu)=\frac{n(\rho+\lambda_2)-(n-2) b_1}{2}
  \end{equation}
  and by eliminating $\delta$ from (\ref{eq:s2def}) and (\ref{eq:b1def}), we have
  \begin{equation}\label{eq:s2=}
  	s_2(\mu)=\rho^2+\lambda_2^2+\frac{1}{4} (n-2) \left(\rho+\lambda_2-b_1\right)^2.
  \end{equation}
  Substituting (\ref{eq:s1=}) in (\ref{eq:gamma=}), we obtain (\ref{eq:GammaCondition}) (the fact that $\gamma$ is nonnegative is easily seen from (\ref{eq:b1def})) and then,
substituting (\ref{eq:s1=}), (\ref{eq:s2=}) and (\ref{eq:GammaCondition}) into (\ref{eq:b2=}) gives (\ref{eq:b2Condition}).
  
  Finally, from Theorem \ref{thm:Basicp=2}, we conclude that
	\[
		(\mu_1,\mu_2,\ldots,\mu_n,\lambda_3,\lambda_4,\ldots,\lambda_m)
	\]
	is realisable.
\end{proof}

\begin{ex}\label{ex:p=2b}
  Let $\sigma$ be any list such that $(1,0,\sigma)$ is realisable. Letting $\rho=1$, $\lambda_2=0$,
$n=4$ and $\delta=0$ in Theorem \ref{thm:p=2}, we see that the list
$(\mu_1,\mu_2,\mu_3,\mu_4,\sigma)$ is also realisable, provided $\mu_1\geq0$,
$(\mu_2,\mu_3,\mu_4)$ is closed under complex conjugation,
$\text{Re}\,\mu_2,\text{Re}\,\mu_3,\text{Re}\,\mu_4\leq0$ and
$\sum_{i=1}^4\mu_i=\sum_{i=1}^4\mu_i^2=1$. For example,
\[
	\left(\frac{1+\sqrt{5}}{2},\frac{1-\sqrt{5}}{2},i,-i,\sigma\right)
\]
is realisable.
\end{ex}

\begin{ex}\label{ex:p=2c}
	Let $\sigma$ be any list such that $(1,-1,\sigma)$ is realisable. Letting $\rho=1$,
$\lambda_2=-1$ and $\delta=0$ in Theorem \ref{thm:p=2}, we have that for any $n\geq3$, the list
\[
	(\rho,\underbrace{-\lambda,-\lambda,\ldots,-\lambda}_{n-1 \text{ eigenvalues}},\sigma)
\]
is realisable, where
\[
	\rho:=\sqrt{\frac{2(n-1)}{n}} \:\text{ and }\: \lambda:=\sqrt{\frac{2}{n(n-1)}}.
\]

Alternatively (again taking $\delta=0$), for any $m\in\mathbb{N}$, Theorem \ref{thm:p=2} also gives
that the list
\[
	\left(
\sqrt{2},\underbrace{-\frac{1}{\sqrt{2}m}\pm\frac{1}{\sqrt{2}m}i,\ldots,-\frac{1}{\sqrt{2}m}\pm\frac
{1}{\sqrt{2}m}i}_{m \text{ pairs}},\sigma \right)
\]
is realisable.
\end{ex}

\begin{rem}
	In Examples \ref{ex:p=2b} and \ref{ex:p=2c}, it was possible to construct a new realisable list with the same trace as the original list. This was made possible by the fact that $\lambda_2\leq0$ in both cases and thus we could choose $\delta=0$ in Theorem \ref{thm:p=2}; however, even when $\lambda_2>0$, it may be possible to preserve the trace of the original spectrum using Theorem \ref{thm:Basicp=2} (see Example \ref{ex:p=2a}).
\end{rem}

\section{A $p=3$ construction}\label{sec:p=3}

In this section, we let $p=3$ in Lemma \ref{lem:SubmatrixConstruction}. For ease of calculation of the characteristic polynomial of $M$, we will confine our attention to the case where $n_1=1$ and so $M$ is a $4\times 4$ matrix. In this case, we seek to replace the eigenvalues $\rho,\alpha+i\beta,\alpha-i\beta$ of a realisable list with eigenvalues $\mu_1,\mu_2,\mu_3,\mu_4$, where $\sigma(M)=(\mu_1,\mu_2,\mu_3,\mu_4)$.

\begin{thm}\label{thm:p=3}
	Let the list $\sigma_0:=(\rho, \alpha+i\beta,\alpha-i\beta,\lambda_4,\lambda_5,\ldots,\lambda_m)$ be realisable, where $\rho$ is the Perron eigenvalue, $\alpha$ is real and $\beta>0$. Let $a$, $t$, and
$\eta$ be any real numbers satisfying $a,t\geq0$ and $0<\eta\leq1$. Then the list
	\[
		\sigma_1:=(\mu_1,\mu_2,\mu_3,\mu_4,\lambda_4,\lambda_5,\ldots,\lambda_m)
	\]
	is realisable, where $\mu_1,\mu_2,\mu_3,\mu_4$ are the roots of the polynomial
	\begin{equation}\label{eq:q(x)WithEta}
		q(x):=(x-\rho )\left((x-\alpha )^2+\beta ^2\right)(x-a) -t \left((x-\alpha )
((1+\eta ) x-\alpha -\eta  \rho )+\beta ^2\right).
	\end{equation}
\end{thm}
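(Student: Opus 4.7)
The plan is to invoke Lemma \ref{lem:SubmatrixConstruction} and Theorem \ref{thm:NonnegativityConditions} with $p=3$ and $n_1=1$, so that the $4\times 4$ block $M$ will have spectrum $(\mu_1,\mu_2,\mu_3,\mu_4)$, leaving the residual spectrum $(\lambda_4,\ldots,\lambda_m)$ to appear as $\sigma(F)$. By Lemma \ref{lem:e} I will assume a realising matrix $B$ of $\sigma_0$ with $Be=\rho e$. Letting $v_1+iv_2$ be an eigenvector of $B$ for $\alpha+i\beta$ with $v_1,v_2\in\mathbb{R}^m$ and setting $Y_1:=[\,e\ v_1\ v_2\,]$ gives $BY_1=Y_1C$ with
\[
C:=\begin{pmatrix}\rho&0&0\\0&\alpha&\beta\\0&-\beta&\alpha\end{pmatrix}.
\]
I will then take $M=\begin{pmatrix}a&K\\L&C\end{pmatrix}$ with row $K=[k_1,k_2,k_3]$ and column $L=[l_1,l_2,l_3]^T$ to be chosen.

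The Schur-complement formula $\det(xI-M)=(x-a)\det(xI-C)-K\,\mathrm{adj}(xI-C)\,L$, together with the identity $(x-\alpha)((1+\eta)x-\alpha-\eta\rho)=(x-\alpha)^2+\eta(x-\alpha)(x-\rho)$, reduces $\det(xI-M)=q(x)$ to three scalar matching conditions:
\[
k_1l_1=t,\qquad k_2l_2+k_3l_3=t\eta,\qquad k_2l_3=k_3l_2.
\]
Parametrising the solutions by $k_1=t/l_1$, $(l_2,l_3)=r(\cos\theta,\sin\theta)$ and $(k_2,k_3)=(t\eta/r)(\cos\theta,\sin\theta)$ for free $l_1,r>0$ and $\theta\in[0,2\pi)$ captures them all, leaving three degrees of freedom to satisfy the nonnegativity constraints.

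It remains to verify $M\in\mathcal{M}_n(Y_1,C)$: beyond $A=[a]\geq 0$, this requires $Y_1L=l_1e+r(\cos\theta\,v_1+\sin\theta\,v_2)\geq 0$ entrywise and $K=hY_1$ for some nonnegative row $h\in\mathbb{R}^{1\times m}$. The first condition amounts to $l_1/r\geq|m(\theta)|$, where $m(\theta):=\min_i(\cos\theta\,v_{1,i}+\sin\theta\,v_{2,i})\leq 0$; the second says $(\eta l_1/r)(\cos\theta,\sin\theta)$ lies in the convex hull $H$ of $\{(v_{1,i},v_{2,i}):1\leq i\leq m\}$, i.e., $l_1/r\leq M(\theta)/\eta$ with $M(\theta)$ the radial extent of $H$ in direction $\theta$. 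The hard part will be selecting $\theta$ so that $\eta\leq M(\theta)/|m(\theta)|$: a nonnegative left Perron eigenvector $y$ of $B$ is orthogonal to $v_1$ and $v_2$, so the origin lies in $H$ and a continuity argument should yield some $\theta$ with $M(\theta)\geq|m(\theta)|$; the hypothesis $0<\eta\leq 1$ is exactly what closes the remaining gap. Once both inequalities are met, Theorem \ref{thm:NonnegativityConditions} will produce a nonnegative realisation of $\sigma_1$.
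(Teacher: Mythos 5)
Your setup is the same as the paper's: you invoke Lemma~\ref{lem:SubmatrixConstruction} and Theorem~\ref{thm:NonnegativityConditions} with $p=3$, $n_1=1$, normalise the Perron eigenvector to $e$ via Lemma~\ref{lem:e}, take $Y_1=[\,e\ v_1\ v_2\,]$ and the same real canonical block $C$, and compute $\det(xI-M)$ to match $q(x)$. Your Schur-complement derivation of the three matching conditions $k_1l_1=t$, $k_2l_2+k_3l_3=t\eta$, $k_2l_3=k_3l_2$ is correct (I checked the adjugate), and the reformulation of the nonnegativity conditions in terms of the convex hull $H$ of $\{(v_{1,i},v_{2,i})\}$ is a nice way to see where the constraint $0<\eta\leq 1$ comes from.

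However, the decisive step is left unsubstantiated. You write that ``a continuity argument should yield some $\theta$ with $M(\theta)\geq|m(\theta)|$,'' but there is no intermediate-value or degree-theoretic structure here to exploit: $M$ and $|m|$ are just two continuous $2\pi$-periodic functions, and continuity alone gives you no crossing. What actually works --- and is precisely the content of the paper's normalisation --- is a direct Cauchy--Schwarz argument at an extreme point of $H$. Take $\theta^*$ in the direction of the vertex $(v_{1,k},v_{2,k})$ of $H$ farthest from the origin, where $k$ achieves $\max_i(v_{1,i}^2+v_{2,i}^2)$. Then $M(\theta^*)\geq \|(v_{1,k},v_{2,k})\|$ since that vertex lies in $H$ along the ray, while for every $i$,
\[
|\cos\theta^*\,v_{1,i}+\sin\theta^*\,v_{2,i}|\leq\|(v_{1,i},v_{2,i})\|\leq\|(v_{1,k},v_{2,k})\|
\]
by Cauchy--Schwarz and the choice of $k$, so $|m(\theta^*)|\leq\|(v_{1,k},v_{2,k})\|\leq M(\theta^*)$. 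This is not a continuity argument; it is an explicit choice of direction, and it is exactly what the paper does by permuting the largest $u_i^2+v_i^2$ to position $1$ and scaling so that $u_1^2+v_1^2=\eta=\max_i(u_i^2+v_i^2)$, after which $L=(1,u_1,v_1)^T$ (your $l_1=1$, $r=\sqrt{\eta}$, $\theta=\theta^*$) and $K=(t,tu_1,tv_1)$ make $Y_1L\geq 0$ follow componentwise from $1+u_1u_i+v_1v_i\geq 1-\eta\geq 0$. Until you replace the phrase ``a continuity argument should yield\ldots'' with this Cauchy--Schwarz computation (or an equivalent), the proof has a genuine gap at its central step, even though the surrounding framework is correct and matches the paper.
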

\begin{proof}
	Let the assumptions (i) and (ii) in Lemma \ref{lem:SubmatrixConstruction} hold, where $B$ is
a nonnegative matrix with spectrum $\sigma_0$ and
	\[
		C=\left[
			\begin{array}{ccc}
				\rho & 0 & 0 \\
				0 & \alpha & \beta \\
				0 & -\beta & \alpha
			\end{array}
		\right].
	\]
	By Lemma \ref{lem:e}, we may assume without loss of generality that the eigenvector
corresponding to $\rho$ is $e$ and so we may write
	\[
		Y_1=\left[
			\begin{array}{ccc}
				e & u & v \\
			\end{array}
		\right],
	\]
	where $u$ and $v$ are real vectors and $u\pm iv$ are eigenvectors corresponding to the eigenvalues $\alpha\pm i\beta$, respectively. We may also assume that
	\[
		\eta=u_1^2+v_1^2=\max_i(u_i^2+v_i^2).
	\]
	To see this, suppose instead that $\tau=u_k^2+v_k^2=\max_i(u_i^2+v_i^2)$. Then we may replace $B$ by $PBP^T$ and $Y$ by $PYD$, where $P$ is the permutation matrix obtained by swapping rows 1 and $k$ of $I_m$ and $D$ is the diagonal matrix
	\[
		D:=\left[
			\begin{array}{ccccccc}
				1 &  &  &  &  &  &  \\
				& \sqrt{\eta/\tau}  &  &  &  &  &  \\
				&  & \sqrt{\eta/\tau} &  &  &  &  \\
				&  &  & 1 &  &  &  \\
				&  &  &  & 1 &  &  \\
				&  &  &  &  & \ddots &  \\
				&  &  &  &  &  & 1
			\end{array}
		\right].
	\]
	
	Now consider the matrix
	\[
		M:=\begin{pmat}[{|..}]
			a & t & tu_1 & tv_1 \cr\-
			1 & & & \cr
			u_1 & & C & \cr
			v_1 & & & \cr
		\end{pmat}.
	\]
	For all $i=1,2,\ldots,m$, the Cauchy-Schwarz inequality gives
	\[
		|u_iu_1+v_iv_1|
		\leq \sqrt{(u_i^2+v_i^2)(u_1^2+v_1^2)}
		\leq \eta
		\leq 1
	\]
	and therefore $-(u_iu_1+v_iv_1)\leq1$. Now, since $1+u_iu_1+v_iv_1$ is precisely the
$i^\mathrm{th}$ component of the vector
	\[
		Y_1\left[
			\begin{array}{c}
				1\\
				u_1\\
				v_1
			\end{array}
		\right],
	\]
	we see that
	\[
		\left[
			\begin{array}{c}
				1\\
				u_1\\
				v_1
			\end{array}
		\right]\in\mathcal{L}(Y_1).
	\]
	Furthermore, since
	\[
		\left[\begin{array}{ccc}t & tu_1 & tv_1\end{array}\right]
		=
		\left[\begin{array}{ccccc}t & 0 & 0 & \cdots & 0\end{array}\right]
		Y_1,
	\]
	we have that
	\[
		\left[\begin{array}{ccc}t & tu_1 & tv_1\end{array}\right]\in\mathcal{K}(Y_1).
	\]
	Therefore $M\in \mathcal{M}_n(Y_1,C)$ and so by Theorem \ref{thm:NonnegativityConditions},
the list
	\[
		(\sigma(M),\lambda_4,\ldots,\lambda_m)
	\]
	is realisable.
	
	Finally, the characteristic polynomial of $M$ is
	\begin{multline*}
		q(x)=(x-\rho ) \left((x-\alpha )^2+\beta ^2\right)(x-a) \\  -t \left((x-\alpha )
\left(\left(1+u_1^2+v_1^2\right) x-\alpha -\left(u_1^2+v_1^2\right) \rho \right)+\beta ^2\right),
	\end{multline*}
	which, after the substitution $u_1^2+v_1^2=\eta$, becomes the polynomial mentioned in the
statement of the theorem.
\end{proof}

\begin{ex}
	Consider the list
	\[
		\sigma_0:=(26,-12+2i,-12-2i,-1+14i,-1-14i).
	\]
	We have $s_1=0$ and $s_2=566$, so $\sigma_0$ is realisable by Theorem \ref{thm:LS}. Applying
Theorem \ref{thm:p=3} with $\rho=26$, $\alpha=-12$, $\beta=2$, $a=0$, $\eta=1$ and $t=550$, we
obtain the new realisable list
	\[
		(42.7876\ldots,5.17729\ldots,-11.9818\ldots,-33.9831\ldots,-1+14i,-1-14i).
	\]
	If desired, we may use three applications of Theorem \ref{thm:Guo} to round off these
numbers and produce
	\[
		(43,5,-12,-34,-1+14i,-1-14i).
	\]
	Like $\sigma_0$, this list is extreme in the sense that it is not realisable for any
smaller Perron eigenvalue (it has trace 0).
\end{ex}

In order to see what type of spectra may be obtained from Theorem \ref{thm:p=3}, we need to analyse the polynomial $q(x)$ in (\ref{eq:q(x)WithEta}). First, we note that for $t=0$, $\sigma_1$ differs from $\sigma_0$ only by the addition of the nonnegative eigenvalue $a$. Therefore, in what follows, we will always assume that $a\leq\rho$ and hence $\rho$ will remain the Perron eigenvalue of $\sigma_1$ after the addition of $a$ to the list. We will now examine how $\sigma_1$ varies as we increase $t$.

To investigate the roots of $q(x)$, it is convenient to label
\begin{eqnarray*}
	f(x) &=& (x-\rho ) \left((x-\alpha )^2+\beta ^2\right) (x-a),\\
	g(x) &=& (x-\alpha ) ((1+\eta ) x-\alpha -\eta  \rho )+\beta ^2,
\end{eqnarray*}
so that $q(x)=f(x)-tg(x)$. As $t$ approaches infinity, the quadratic, linear and constant terms of
$q(x)$ become increasingly dominated by those of $-tg(x)$ and therefore two of the roots of $q(x)$, say $\mu_+$ and $\mu_-$, will approach those of $g(x)$; however, as $\eta$ tends to zero,
$g(x)\rightarrow(x-\alpha)^2+\beta^2$ and so for small $\eta$, the eigenvalues $\alpha\pm i\beta$
of $\sigma_0$ will exhibit little variation as $t$ increases. Therefore, from now on, we will always
assume that $\eta=1$. Under this assumption, we rewrite:
\begin{align}
  g(x)&= \beta ^2+(x-\alpha ) (2x-\alpha -\rho ),\notag\\
  q(x)&=(x-\rho ) \left((x-\alpha )^2+\beta ^2\right) (x-a)-t \left(\beta ^2+(x-\alpha ) (2
x-\alpha -\rho )\right)\label{eq:q(x)}
\end{align}
and the roots of $g(x)$ become
\begin{align}
	\lambda_+ &\::=\:\frac{1}{4} \left(\rho +3 \alpha+\sqrt{(\rho-\alpha)^2-8 \beta
^2}\right),\notag\\
	\lambda_- &\::=\:\frac{1}{4} \left(\rho +3 \alpha-\sqrt{(\rho-\alpha)^2-8 \beta
^2}\right).\label{eq:lambdas}\\\notag
\end{align}

We now examine how the Perron eigenvalue of $\sigma_1$ depends on $t$. Let $s\geq0$. Substituting $\rho+s$ for $x$ in (\ref{eq:q(x)}) and solving for $t$ yields
\begin{equation}\label{eq:t(s)}
	t=\frac{s (\rho +s-a) \left(\beta ^2+(\rho +s-\alpha )^2\right)}{\beta ^2+(\rho +s-\alpha )
(\rho +2 s-\alpha )},
\end{equation}
so we see that for large $s$, $s\sim\sqrt{t}$.

To sum up, let us denote the roots of $q(x)$ by $\rho+s,\mu_+,\mu_-,\psi$, where $\rho+s$ is the
Perron eigenvalue of $\sigma_1$ and $\psi$ is the remaining real root. We have observed that
$s\rightarrow\infty$ and $|\mu_\pm-\lambda_\pm|\rightarrow0$ as $t\rightarrow\infty$. Finally, we
note that the matrix $M$ in the proof of Theorem \ref{thm:p=3} has trace $\rho+2\alpha+a$ (i.e.
$\mathrm{trace}(\sigma_1)=\mathrm{trace}(\sigma_0)+a$) and in particular, this trace is independent
of $t$. Thus, we must have that $\psi\rightarrow-\infty$ as $t\rightarrow\infty$ and
$\psi\sim-\sqrt{t}$ for large $t$.

Since two of the eigenvalues of the spectrum $\sigma_1$ converge to $\lambda_\pm$ as $t$ increases, it is useful to examine how $\lambda_\pm$ depend on the initial eigenvalues $\rho$ and $\alpha\pm i\beta$. Consider the following conditions:
\begin{eqnarray}
  &&\rho\geq\alpha+2\sqrt{2}\beta\label{eq:ineq1};\\
  &&\alpha<0 \text{ and } \rho\geq-(\alpha^2+\beta^2)/\alpha\label{eq:ineq2};\\
  &&\rho\geq-3\alpha.\label{eq:ineq3}
\end{eqnarray}
From the formulae for $\lambda_\pm$ (\ref{eq:lambdas}), we see that $\lambda_+$ and $\lambda_-$ are real when (\ref{eq:ineq1}) holds and complex otherwise. Assuming $\lambda_+$ and $\lambda_-$ are real, they have different sign ($\lambda_-\leq0\leq\lambda_+$) when (\ref{eq:ineq2}) holds and the same sign otherwise. Assuming $\lambda_+$ and $\lambda_-$ are real and have equal sign, $\lambda_+,\lambda_-\geq0$ when (\ref{eq:ineq3}) holds and $\lambda_+,\lambda_-\leq0$ otherwise. Figure \ref{fig:LambdaDiagram} illustrates these various possibilities.

\begin{figure}[h]
	\centering
	\includegraphics[scale=0.98]{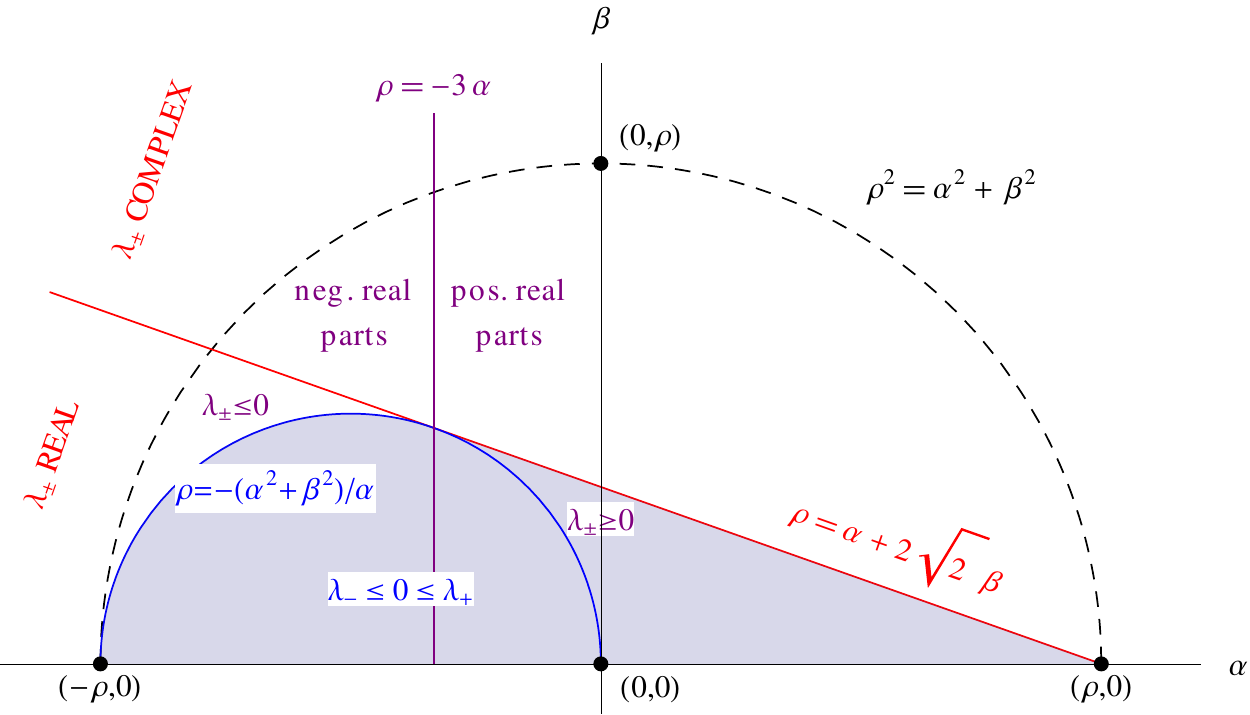}
	\caption{Dependence of $\lambda_+$ and $\lambda_-$ on $\rho$, $\alpha$ and $\beta$}
	\label{fig:LambdaDiagram}
\end{figure}

In general, the roots of $q(x)$ are complicated functions of $\rho,\alpha,\beta,a$ and $t$,
but there is a situation where these formulae may be simplified. Let us consider the case where
(\ref{eq:ineq1}) holds and either (\ref{eq:ineq2}) or (\ref{eq:ineq3}) holds. This case corresponds to the shaded region of Figure \ref{fig:LambdaDiagram}. Under these assumptions, $\lambda_+\geq0$ and this allows us to set $a=\lambda_+$. Hence $x-\lambda_+$ becomes a factor of $q(x)$. Similarly to the substitution made in (\ref{eq:t(s)}), we may then specify a value of $t$ which forces the remaining cubic polynomial to have the root $\rho+s$ and we may then factor out $x-\rho-s$. Finally, the remaining quadratic may be solved, giving the following result:

\begin{prop}\label{prop:RootsExpression}
	Let the list $(\rho, \alpha+i\beta,\alpha-i\beta,\lambda_4,\lambda_5,\ldots,\lambda_m)$ be realisable,
where $\rho$ is the Perron eigenvalue, $\alpha$ is real and $\beta>0$. Assume also that either
(\ref{eq:ineq2}) holds or both (\ref{eq:ineq1}) and (\ref{eq:ineq3}) hold. Then for all $s\geq0$,
the list
	\[
		(\rho+s,\mu_+,\mu_-,\lambda_+,\lambda_4,\lambda_5,\ldots,\lambda_m)
	\]
	is realisable, where
	\begin{multline*}
		\mu_\pm=\\ \alpha -\frac{s}{2}\pm\frac{1}{2}\sqrt{\frac{ s (s-2 \alpha
)^2+\left(s^2-4 s \alpha -4 \beta ^2\right) \rho +\left(4 \beta ^2+s (3 s-4 \alpha +4 \rho )\right)
\lambda _-}{\left(s+\rho -\lambda _-\right)}}
	\end{multline*}
	and $\lambda_+$ and $\lambda_-$ are defined in (\ref{eq:lambdas}).
\end{prop}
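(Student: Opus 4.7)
The plan is to specialize Theorem \ref{thm:p=3} with $\eta=1$ and $a=\lambda_+$, then choose $t$ so that $\rho+s$ is forced to be a root of $q(x)$.

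First I verify that $\lambda_+$ is real and nonnegative, so that $a = \lambda_+$ is admissible in Theorem \ref{thm:p=3}. When both (\ref{eq:ineq1}) and (\ref{eq:ineq3}) hold this is immediate from (\ref{eq:lambdas}) and the discussion preceding the proposition. When only (\ref{eq:ineq2}) holds, the AM--GM inequality
\[
-2\alpha - \frac{\beta^2}{\alpha} \;\geq\; 2\sqrt{(-2\alpha)(-\beta^2/\alpha)} \;=\; 2\sqrt{2}\,\beta
\]
(valid since $\alpha<0$), combined with (\ref{eq:ineq2}), produces (\ref{eq:ineq1}); and (\ref{eq:ineq2}) itself places us in the region where $\lambda_- \leq 0 \leq \lambda_+$.

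Next, with $\eta=1$ the quadratic $g(x)=(x-\alpha)(2x-\alpha-\rho)+\beta^2$ has leading coefficient $2$ and roots $\lambda_\pm$, hence $g(x)=2(x-\lambda_+)(x-\lambda_-)$. Substituting $a=\lambda_+$ into (\ref{eq:q(x)}) exposes $x-\lambda_+$ as a common factor, yielding
\[
q(x) \;=\; (x-\lambda_+)\,r(x), \qquad r(x) := (x-\rho)\bigl((x-\alpha)^2+\beta^2\bigr) - 2t(x-\lambda_-).
\]
The equation $r(\rho+s)=0$ is linear in $t$ and solves to
\[
t \;=\; \frac{s\bigl((\rho+s-\alpha)^2+\beta^2\bigr)}{2(\rho+s-\lambda_-)},
\]
which is nonnegative for every $s\geq 0$: under (\ref{eq:ineq2}) we have $\lambda_- \leq 0 < \rho$, while under (\ref{eq:ineq3}) we have $\lambda_- \leq \tfrac{1}{4}(\rho+3\alpha) < \rho$ (using $\alpha<\rho$, which follows from the Perron inequality $\rho \geq \sqrt{\alpha^2+\beta^2}$ and $\beta>0$). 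Hence Theorem \ref{thm:p=3} applies with this choice of $t$.

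Finally, Vieta's formulas on the cubic $r(x)$ give that the two remaining roots $\mu_\pm$ satisfy $\mu_+ + \mu_- = 2\alpha - s$ and $\mu_+\mu_- = \bigl(\rho(\alpha^2+\beta^2) - 2t\lambda_-\bigr)/(\rho+s)$. The quadratic formula then produces $\mu_\pm = \alpha - s/2 \pm \tfrac{1}{2}\sqrt{(s-2\alpha)^2 - 4\mu_+\mu_-}$. The main obstacle is the algebraic simplification of the discriminant: after substituting the explicit value of $t$ and clearing denominators, one writes $(s-2\alpha)^2 - 4\mu_+\mu_-$ as a single fraction over $(\rho+s)(\rho+s-\lambda_-)$, and must observe that the numerator is divisible by $\rho+s$; cancelling this common factor leaves the quoted expression over $s+\rho-\lambda_-$. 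This is a purely routine polynomial identity, and it passes the sanity check at $s=0$, where $t=0$ and the formula correctly reduces to $\mu_\pm = \alpha \pm i\beta$.
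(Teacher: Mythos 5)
Your proof is correct and follows the same strategy the paper sketches in the discussion preceding the proposition and then invokes in its formal proof: set $\eta=1$ and $a=\lambda_+$, factor $q(x)=(x-\lambda_+)r(x)$, pick $t\geq0$ so that $r(\rho+s)=0$, and read off $\mu_\pm$ from the residual quadratic; your AM--GM step showing that \eqref{eq:ineq2} implies \eqref{eq:ineq1} is equivalent to the paper's observation that $\frac{\alpha^2+\beta^2}{-\alpha}-(\alpha+2\sqrt{2}\beta)=\frac{(\sqrt{2}\alpha+\beta)^2}{-\alpha}\geq0$. You fill in more of the bookkeeping (Vieta, sign of $t$, the $s=0$ sanity check) than the paper does, but the underlying argument is identical.
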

\begin{proof}
  From the preceding discussion, it suffices to show that (\ref{eq:ineq2}) implies
(\ref{eq:ineq1}). Indeed
\[
\frac{\alpha^2+\beta^2}{-\alpha}-(\alpha+2\sqrt{2}\beta)=\frac{(\sqrt{2}\alpha+\beta)^2}{-\alpha}
\geq0.\qedhere
\]
\end{proof}

\begin{ex}
  Let $\sigma$ be any list such that
\[
  \sigma_0:=(6,-2+2\sqrt{2}i,-2-2\sqrt{2}i,\sigma)
\]
is realisable. Substituting $\rho=6$, $\alpha=-2$ and $\beta=2\sqrt{2}$ in Proposition
\ref{prop:RootsExpression}, we have that for any $s\geq0$, the list
$\sigma_1=(\rho+s,\mu_-,\mu_+,0,\sigma)$ is realisable, where
\begin{equation}\label{eq:SpecialPhiFormula}
  \mu_\pm:=\frac{1}{2} \left(-4-s\pm\sqrt{16+8 s+s^2-\frac{288}{6+s}}\right).
\end{equation}
In particular, taking $s=2$, we have that $(8,-3,-3,0,\sigma)$ is realisable.

This example is reminiscent of the kind of perturbation given in Theorem \ref{thm:GuoGuo}, except
that we have also perturbed the imaginary part of the original complex conjugate pair
$-2\pm2\sqrt{2}i$. In fact, using a combination of Proposition \ref{prop:RootsExpression} and
Theorem \ref{thm:GuoGuo}, it is possible to show that
\begin{equation}\label{eq:SpecialList}
  (8,-3+ib,-3-ib,0,\sigma)
\end{equation}
is realisable for all $0\leq b\leq2\sqrt{2}$. To see this, let us label the expression under the
square root in (\ref{eq:SpecialPhiFormula}) as
\[
  h(s):=16+8 s+s^2-\frac{288}{6+s}.
\]
Since $h(0)=-32\leq -4b^2\leq 0=h(2)$ and $h$ is continuous on $[0,2]$, there
exists $s_0\in[0,2]$ such that $h(s_0)=-4b^2$. Then, taking $s=s_0$ gives the realisable list
\[
  \left(6+s_0,-2-\frac{s_0}{2}+ib,-2-\frac{s_0}{2}-ib,0,\sigma\right).
\]
Finally, letting $\delta=1-s_0/2$ in (\ref{eq:GuoGuoDecrease}), we may produce
(\ref{eq:SpecialList}).
\end{ex}

We finish this section with an example for which the limiting eigenvalues $\lambda_+$ and
$\lambda_-$ are complex:

\begin{ex}
  Let $\sigma$ be any list for which $\sigma_0=(2,i,-i,\sigma)$ is realisable. Applying
Theorem \ref{thm:p=3} with $a=0$, $\eta=1$ and $t=1$ produces the realisable spectrum
\[
  (2.4710\ldots,0.1868\ldots+(0.6666\ldots)i,0.1868\ldots-(0.6666\ldots)i,-0.8445\ldots,\sigma).
\]
$t=5$ gives
\[
  (3.8755\ldots,0.4100\ldots+(0.5573\ldots)i,0.4100\ldots-(0.5573\ldots)i,-2.6954\ldots,\sigma).
\]
$t=500$ gives
\[
  (32.1356\ldots,0.499\ldots+(0.5007\ldots)i,0.499\ldots-(0.5007\ldots)i,-31.1336\ldots,\sigma),
\]
illustrating the convergence of two of the eigenvalues of $\sigma_1$ to $\lambda_\pm=1/2\pm(1/2)i$.
\end{ex}

\bibliography{refs}
\bibliographystyle{plain}

\end{document}